\newtheorem{theorem}{Theorem}
\newtheorem{corollary}[theorem]{Corollary}
\newtheorem{lemma}[theorem]{Lemma}
\theoremstyle{definition}
\newtheorem{definition}[theorem]{Definition}
\theoremstyle{remark}
\newtheorem{remark}[theorem]{Remark}
\DeclareMathOperator{\conv}{conv}
\newcommand\doi[1]{\href{http://dx.doi.org/#1}{\texttt{doi:#1}}}
\tikzstyle{lattice} = [draw=black, fill=black]
\tikzstyle{valid} = [draw=red, thick, fill=white]
\tikzstyle{intersect} = [draw=orange, fill=orange]
\tikzstyle{boundary} = [draw=blue, fill=blue]
\tikzstyle{triangle} = [draw=black, thick, fill=green!20]
\tikzstyle{inequality} = [draw=green, thick]
\tikzstyle{someline} = [draw=black, dashed]
\title{Characterization of tropical planar curves up to genus six}
\author{Ayush Kumar Tewari}
\address[A.~K.~Tewari]{Department of Mathematics: Algebra and Geometry, Ghent University, Belgium}
\email{ayushkumar.tewari@ugent.be}
\subjclass{52B20 (05C10, 14T15)}
\keywords{skeleton, doubly heavy cycle, enve-loop graph}
\thanks{%
A.~K.~Tewari has been supported by the UGent BOF grant BOF/STA/201909/038. This research was supported through the programme "Oberwolfach Leibniz Fellows" by the Mathematisches Forschungsinstitut Oberwolfach in 2021}
\pgfplotsset{compat=1.17}
\begin{document}

\maketitle

\begin{abstract}
We provide new forbidden criterion for realizability of smooth tropical plane curves. This in turn provides us a complete classification of smooth tropical plane curves up to genus six.
\end{abstract}

\section{Introduction}

Tropical plane curves exhibit duality with respect to regular subdivisions of a certain lattice polygon. Smooth tropical planar curves are dual to unimodular triangulations. This duality also describes tropical planar curves as metric graphs, where each edge length can be understood as a metric. The skeleton is a trivalent, planar metric graph which allows a deformation retract from the tropical plane curve to itself. It can be obtained via a graph theoretic operation from the tropical curve. An important invariant under this duality is the \emph{genus} of the tropical curve which is equal to the number of cycles in the skeleton and which is also equal to the number of interior lattice points of the corresponding lattice polygon. A central problem concerning skeleta of smooth tropical plane curves has been to categorize which trivalent, planar graphs can occur as skeleta of smooth tropical plane curves.

The graphs which do occur as skeleta of tropical curves are referred as \emph{troplanar} or \emph{tropically planar} \cite{M19}. Starting from the work in \cite{JB15}, there has been immense interest to find forbidden criteria to rule out classes of graphs which can not be realizable. We illustrate the duality between the unimdoular triangulations, tropical plane curves and skeletons by the Figure \ref{fig:double_heavy_cycle_two_loops} and refer the reader to \cite{JB15}, \cite{M19}, \cite{Joswig2020} for further details about the duality and the graph theoretic operation to obtain the skeleton. In \cite{JB15}, computational techniques were employed to classify all troplanar graphs for lower genera $g=3,4$ and $g=5$. This classification provides us a complete classification of all troplanar graphs till genus four. In \cite{M19}, this computational study is pushed to the case of genus six and many new criteria are also established. However the classification still remained incomplete for genus five and six. Some of the previously known forbidden patterns are \emph{sprawling} \cite{JB15}, \emph{crowded} \cite{M17} and \emph{TIE-fighter} \cite{M19}. In \cite{Joswig2020}, for the first time a complete classification of all troplanar graphs up to genus five was achieved which is independent of computational enumeration. Along with completing the case for genus five, \cite{Joswig2020} also provided the list of eight graphs which remained unclassified for the case of genus six, depicted in Figure \ref{fig:genus6_uncategorized}.

\begin{figure}[th]\centering
  \begin{tikzpicture}[scale=0.7]
    \draw[] (0,0) -- (-1,-2);
    \draw[] (-1,-2) -- (-3,-4);
    \draw[] (-6,-4) -- (-3,-4);
    \draw[] (-6,-4) -- (-6,-3);
    \draw[] (-6,-3) -- (0,0);
    \draw[] (0,0) -- (-1,-1);
    \draw[] (-1,-1) -- (-2,-1);
    \draw[] (-1,-1) -- (-1,-2);
    \draw[] (-1,-2) -- (-2,-1);
    \draw[] (-2,-1) -- (-6,-4);
    \draw[] (-6,-4) -- (-5,-3);
    \draw[] (-5,-3) -- (-6,-3);
    \draw[] (-5,-3) -- (-2,-1);
    \draw[] (-1,-2) -- (-6,-4);
    \draw[] (-1,-2) -- (-3,-3);
    \draw[] (-3,-3) -- (-3,-4);
    \draw[] (-3,-3) -- (-6,-4);
    \draw[] (-2,-1) -- (-2,-2);
    \draw[] (-2,-2) -- (-1,-2);
    \draw[] (-2,-2) -- (-3,-2);
    \draw[] (-3,-2) -- (-2,-1);
    \draw[] (-3,-2) -- (-6,-4);
    \draw[] (-3,-2) -- (-4,-3);
    \draw[] (-4,-3) -- (-6,-4);
    \draw[] (-4,-3) -- (-2,-2);
    \draw[] (-4,-3) -- (-1,-2);
    \fill[black] (0,0) circle (.07cm);
    \fill[black] (-1,-1) circle (.07cm);
    \fill[black] (-1,-2) circle (.07cm);
    \fill[black] (-2,-1) circle (.07cm);
    \fill[black] (-2,-2) circle (.07cm);
    \fill[black] (-3,-2) circle (.07cm);
    \fill[black] (-3,-3) circle (.07cm);
    \fill[black] (-4,-3) circle (.07cm);
    \fill[black] (-6,-4) circle (.07cm);
    \fill[black] (-6,-3) circle (.07cm);
    \fill[black] (-5,-3) circle (.07cm);
  \end{tikzpicture}
  \qquad
  \begin{tikzpicture}[scale=0.165]
    \draw[blue] (0,0) -- (0,2);
    \draw[blue] (0,0) -- (2,0);
    \draw[blue] (0,2) -- (2,0);
    \draw[] (0,2) -- (-0.5,4);
    \draw[] (2,0) -- (4,-0.5);
    \draw[brown] (0,0) -- (-1,-1);
    \draw[violet] (-1,-1) -- (-1,-5);
    \draw[violet] (-1,-1) -- (-3,-1);
    \draw[magenta] (-3,-1) -- (-3,-2);
    \draw[violet] (-3,-2) -- (-1,-5);
    \draw[magenta] (-3,-2) -- (-5,0);
    \draw[magenta] (-3,-1) -- (-7,3);
    \draw[magenta] (-5,0) -- (-7,3);
    \draw[olive] (-1,-5) -- (1,-11);
    \draw[olive] (1,-11) -- (-5,0);
    \draw[teal] (-7,3) -- (-8.5,5);
    \draw[red] (-8.5,5) -- (-12.5,11);
    \draw[red] (-8.5,5) -- (-12.5,9);
    \draw[red] (-12.5,9) -- (-12.5,11);
    \draw[] (-12.5,9) -- (-13.5,9);
    \draw[] (-12.5,11) -- (-13.5,13);
    \draw[orange] (1,-11) -- (2,-13.5);
    \draw[green] (2,-13.5) -- (4,-19.5);
    \draw[green] (2,-13.5) -- (5,-18.5);
    \draw[green] (4,-19.5) -- (5,-18.5);
    \draw[] (4,-19.5) -- (4,-21);
     \draw[] (5,-18.5) -- (7,-19.5);
  \end{tikzpicture}
  \qquad
  \begin{tikzpicture}[scale=0.35]
  \centering
   \draw[magenta] (-2,0) -- (0,0);
   \draw[violet] (0,0) -- (2,0);
   \draw[magenta] (-2,0) -- (-2,-2);
   \draw[magenta] (-2,-2) -- (0,-2);
   \draw[violet] (0,-2) -- (2,-2);
   \draw[violet] (2,-2) -- (2,0);
   \draw[magenta] (0,0) -- (0,-2);
   \draw[teal] (-2,0) -- (-3,1);
   \draw[brown] (2,0) -- (3,1);
   \draw[olive] (-2,-2) -- (0,-4);
   \draw[olive] (2,-2) -- (0,-4);
   \draw[orange] (0,-4) -- (0,-5);

    \filldraw[fill=white,draw=red] (-3,1) circle (0.5cm);
    \filldraw[fill=white,draw=blue] (3,1) circle (0.5cm);
        \filldraw[fill=white,draw=green] (0,-5.5) circle (0.5cm);

  \end{tikzpicture}
  \caption{A unimodular triangulation of a genus~6 polytope (left), its dual graph (center), and the corresponding skeleton which has a double heavy cycle with two loops (right)}
  \label{fig:double_heavy_cycle_two_loops}
\end{figure}
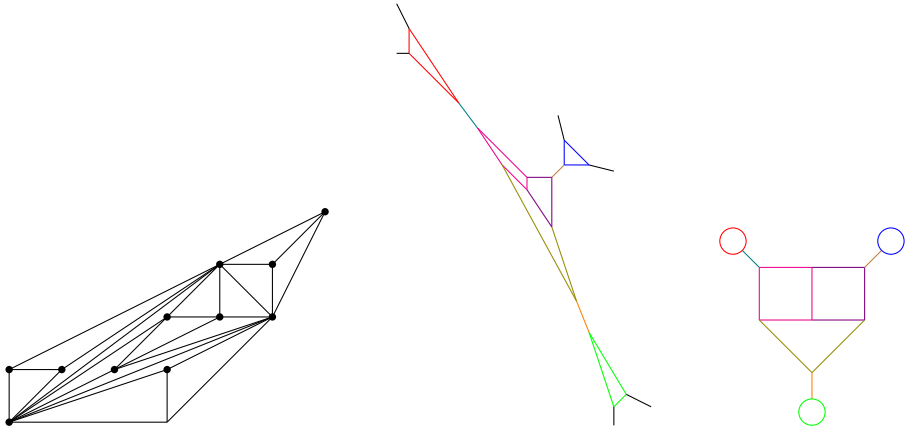

In this article we provide the first complete classification of all troplanar graphs up to genus six in the form of Theorem \ref{thm:classify_genus_six}. We provide a new criterion stated as \emph{heavy cycle with one loop}, which extends the arguments of the criterion \emph{heavy cycle with two loops} discussed in \cite{Joswig2020}. We also define a new graph structure called \emph{double heavy cycle} and provide a structural result for a graph with double heavy cycle to be troplanar, which helps us in completely classifying all genus six graphs. 

\begin{figure}[H]\centering
  \begin{tikzpicture}[scale=0.4]
    \draw[] (0,0) -- (2,0);
    \draw[] (0,0) -- (0,-2);
    \draw[] (0,-2) -- (2,-2);
    \draw[] (2,0) -- (1.5,-1);
    \draw[] (2,0) -- (2.5,-1);
    \draw[] (1.5,-1) -- (2.5,-1);
    \draw[] (1.5,-1) -- (2,-2);
    \draw[] (2.5,-1) -- (2,-2);
    \draw[] (0,0) -- (-1,0);
    \draw[] (-1,0) -- (-2,-1);
    \draw[] (-1,0) -- (-2,1);
    \draw[] (0,-2) -- (-1,-2);
    \draw (-1.5,-2) circle (0.5cm);
    \draw (-2,0) ellipse (0.2cm and 1cm);
    \node[draw] at (0,-3) {a};
    \fill[black] (0,0) circle (.1cm) node[align=left,   above]{};
    \fill[black] (0,-2) circle (.1cm) node[align=left,   above]{};
    \fill[black] (2,0) circle (.1cm) node[align=left,   above]{};
    \fill[black] (-1,0) circle (.1cm) node[align=left,   above]{};
    \fill[black] (-2,-1) circle (.1cm) node[align=left,   above]{};
    \fill[black] (-2,1) circle (.1cm) node[align=left,   above]{};
    \fill[black] (-1,-2) circle (.1cm) node[align=left,   above]{};
    \fill[black] (0,-2) circle (.1cm) node[align=left,   above]{};
    \fill[black] (2,-2) circle (.1cm) node[align=left,   above]{};
    \fill[black] (1.5,-1) circle (.1cm) node[align=left,   above]{};
    \fill[black] (2.5,-1) circle (.1cm) node[align=left,   above]{};
  \end{tikzpicture}
  \qquad
  \begin{tikzpicture}[scale=0.4]
    \draw[] (15,0) -- (16,0);
    \draw[] (13,0) -- (13,-2);
    \draw[] (15,-2) -- (16,-2);
    \draw[] (16,0) -- (16,-2);
    \draw[] (16,-2) -- (17,-2);
    \draw[] (16,0) -- (17,0);
    \draw[] (17,0) -- (18,1);
    \draw[] (17,0) -- (18,-1);
    \draw (17.5,-2) circle (0.5cm);
    \draw (14,0) ellipse (1cm and 0.3cm);
    \draw (14,-2) ellipse (1cm and 0.3cm);
    \draw (18,0) ellipse (0.2cm and 1cm);
    \node[draw] at (16,-3) {b};
    \fill[black] (13,0) circle (.1cm) node[align=left,   above]{};
    \fill[black] (13,-2) circle (.1cm) node[align=left,   above]{};
    \fill[black] (15,0) circle (.1cm) node[align=left,   above]{};
    \fill[black] (15,-2) circle (.1cm) node[align=left,   above]{};
    \fill[black] (16,0) circle (.1cm) node[align=left,   above]{};
    \fill[black] (16,-2) circle (.1cm) node[align=left,   above]{};
    \fill[black] (17,0) circle (.1cm) node[align=left,   above]{};
    \fill[black] (18,1) circle (.1cm) node[align=left,   above]{};
    \fill[black] (18,-1) circle (.1cm) node[align=left,   above]{};
    \fill[black] (17,-2) circle (.1cm) node[align=left,   above]{};
  \end{tikzpicture}
  \qquad
  \vspace{0.3cm}
  \begin{tikzpicture}[scale=0.4]
    \draw[] (0,-4) -- (2,-4);
    \draw[] (0,-4) -- (0,-6);
    \draw[] (0,-6) -- (2,-6);
    \draw[] (0,-4) -- (-2,-4);
    \draw[] (0,-6) -- (-2,-6);
    \draw[] (2,-4) -- (2,-6);
    \draw[] (2,-4) -- (3,-4);
    \draw[] (2,-6) -- (3,-6);
    \draw[] (5,-6) -- (6,-6);
    \draw (3.5,-4) circle (0.5cm);
    \draw (6.5,-6) circle (0.5cm);
    \draw (-2,-5) ellipse (0.2cm and 1cm);
    \draw (4,-6) ellipse (1cm and 0.2cm);
    \node[draw] at (0,-7) {c};
    \fill[black] (0,-4) circle (.1cm) node[align=left,   above]{};
    \fill[black] (0,-6) circle (.1cm) node[align=left,   above]{};
    \fill[black] (-2,-6) circle (.1cm) node[align=left,   above]{};
    \fill[black] (2,-6) circle (.1cm) node[align=left,   above]{};
    \fill[black] (3,-6) circle (.1cm) node[align=left,   above]{};
    \fill[black] (5,-6) circle (.1cm) node[align=left,   above]{};
    \fill[black] (2,-4) circle (.1cm) node[align=left,   above]{};
    \fill[black] (-2,-4) circle (.1cm) node[align=left,   above]{};
    \fill[black] (3,-4) circle (.1cm) node[align=left,   above]{};
  \end{tikzpicture}
  \qquad
  \begin{tikzpicture}[scale=0.4]
    \draw[] (12,-4) -- (12,-6);
    \draw[] (12,-6) -- (11,-6);
    \draw[] (12,-4) -- (13,-4);
    \draw[] (12,-6) -- (13,-6);
    \draw[] (13,-4) -- (13,-6);
    \draw[] (13,-4) -- (14,-5);
    \draw[] (13,-6) -- (14,-5);
    \draw[] (14,-5) -- (15,-5);
    \draw[] (11,-4) -- (12,-4);
    \draw[] (9,-4) -- (8,-4);
    \draw (10,-4) ellipse (1cm and 0.2cm);
    \draw (7.5,-4) circle (0.5cm);
    \draw (10.5,-6) circle (0.5cm);
    \draw (15.5,-5) circle (0.5cm);
    \node[draw] at (12,-7) {d};
    \fill[black] (12,-4) circle (.1cm) node[align=left,   above]{};
    \fill[black] (11,-4) circle (.1cm) node[align=left,   above]{};
    \fill[black] (9,-4) circle (.1cm) node[align=left,   above]{};
    \fill[black] (8,-4) circle (.1cm) node[align=left,   above]{};
    \fill[black] (13,-4) circle (.1cm) node[align=left,   above]{};
    \fill[black] (12,-6) circle (.1cm) node[align=left,   above]{};
    \fill[black] (11,-6) circle (.1cm) node[align=left,   above]{};
    \fill[black] (13,-6) circle (.1cm) node[align=left,   above]{};
    \fill[black] (14,-5) circle (.1cm) node[align=left,   above]{};
    \fill[black] (15,-5) circle (.1cm) node[align=left,   above]{};
  \end{tikzpicture}
  \qquad
  \begin{tikzpicture}[scale=0.4]
    \draw[] (1,-9.5) -- (1,-11.5);
    \draw[] (1,-9.5) -- (-2,-9.5);
    \draw[] (-2,-11.5) -- (-2,-9.5);
    \draw[] (1,-9.5) -- (4,-9.5);
    \draw[] (4,-9.5) -- (4,-11.5);
    \draw[] (1,-11.5) -- (0,-11.5);
    \draw[] (1,-11.5) -- (2,-11.5);
    \draw[] (-2,-9.5) -- (-2,-9);
    \draw[] (4,-9.5) -- (4,-9);
    \draw (4,-8.5) circle (0.5cm);
    \draw (-2,-8.5) circle (0.5cm);
    \draw (-1,-11.5) ellipse (1cm and 0.2cm);
    \draw (3,-11.5) ellipse (1cm and 0.2cm);
    \node[draw] at (1,-12.5) {e};
    \fill[black] (1,-9.5) circle (.1cm) node[align=left,   above]{};
    \fill[black] (-2,-9.5) circle (.1cm) node[align=left,   above]{};
    \fill[black] (4,-9.5) circle (.1cm) node[align=left,   above]{};
    \fill[black] (1,-11.5) circle (.1cm) node[align=left,   above]{};
    \fill[black] (0,-11.5) circle (.1cm) node[align=left,   above]{};
    \fill[black] (2,-11.5) circle (.1cm) node[align=left,   above]{};
    \fill[black] (-2,-9) circle (.1cm) node[align=left,   above]{};
    \fill[black] (4,-9) circle (.1cm) node[align=left,   above]{};
    \fill[black] (-2,-11.5) circle (.1cm) node[align=left,   above]{};
    \fill[black] (4,-11.5) circle (.1cm) node[align=left,   above]{};
  \end{tikzpicture}
  \qquad
  \begin{tikzpicture}[scale=0.4]
    \draw[] (9,-9) -- (11,-9);
    \draw[] (9,-11) -- (11,-11);
    \draw[] (11,-9) -- (10,-10);
    \draw[] (10,-10) -- (11,-11);
    \draw[] (10,-10) -- (11,-10);
    \draw[] (11,-9) -- (14,-9);
    \draw[] (11,-11) -- (14,-11);
    \draw[] (14,-9) -- (14,-11);
    \draw[] (11,-10) -- (14,-9);
    \draw[] (11,-10) -- (14,-11);
    \draw[] (12,-9.66) -- (12,-10.34);
    \draw (8.5,-9) circle (0.5cm);
    \draw (8.5,-11) circle (0.5cm);
    \node[draw] at (10,-12) {f};
    \fill[black] (9,-9) circle (.1cm) node[align=left,   above]{};
    \fill[black] (11,-9) circle (.1cm) node[align=left,   above]{};
    \fill[black] (10,-10) circle (.1cm) node[align=left,   above]{};
    \fill[black] (9,-11) circle (.1cm) node[align=left,   above]{};
    \fill[black] (11,-11) circle (.1cm) node[align=left,   above]{};
    \fill[black] (11,-10) circle (.1cm) node[align=left,   above]{};
    \fill[black] (12,-9.66) circle (.1cm) node[align=left,   above]{};
    \fill[black] (12,-10.34) circle (.1cm) node[align=left,   above]{};
    \fill[black] (14,-9) circle (.1cm) node[align=left,   above]{};
    \fill[black] (14,-11) circle (.1cm) node[align=left,   above]{};
  \end{tikzpicture}
  \qquad
  \begin{tikzpicture}[scale=0.4]
    \draw[] (0,-15) -- (-1,-15);
    \draw[] (0,-15) -- (1,-14);
    \draw[] (0,-15) -- (1,-16);
    \draw[] (1,-14) -- (3,-15);
    \draw[] (1,-16) -- (3,-15);
    \draw[] (2,-15.5) -- (2,-14.5);
    \draw[] (1,-14) -- (5,-14);
    \draw[] (1,-16) -- (5,-16);
    \draw[] (3,-15) -- (4,-15);
    \draw[] (4,-15) -- (5,-14);
    \draw[] (4,-15) -- (5,-16);
    \draw[] (5,-14) -- (5,-16);
    \draw (-1.5,-15) circle (0.5cm);
    \node[draw] at (2,-17.5) {g};
    \fill[black] (1,-14) circle (.1cm) node[align=left,   above]{};
    \fill[black] (5,-14) circle (.1cm) node[align=left,   above]{};
    \fill[black] (1,-16) circle (.1cm) node[align=left,   above]{};
    \fill[black] (5,-16) circle (.1cm) node[align=left,   above]{};
    \fill[black] (0,-15) circle (.1cm) node[align=left,   above]{};
    \fill[black] (-1,-15) circle (.1cm) node[align=left,   above]{};
    \fill[black] (2,-14.5) circle (.1cm) node[align=left,   above]{};
    \fill[black] (2,-15.5) circle (.1cm) node[align=left,   above]{};
    \fill[black] (3,-15) circle (.1cm) node[align=left,   above]{};
    \fill[black] (4,-15) circle (.1cm) node[align=left,   above]{};
  \end{tikzpicture}
  \qquad
  \begin{tikzpicture}[scale=0.4]
    \draw[] (8,-16) -- (9,-16);
    \draw[] (9,-16) -- (10,-15.5);
    \draw[] (9,-16) -- (10,-16.5);
    \draw[] (10,-15.5) -- (10,-16.5);
    \draw[] (10,-16.5) -- (13,-16.5);
    \draw[] (10,-15.5) -- (11.5,-14);
    \draw[] (11.5,-13) -- (11.5,-14);
    \draw[] (13,-15.5) -- (11.5,-14);
    \draw[] (13,-15.5) -- (13,-16.5);
    \draw[] (13,-15.5) -- (14,-16);
    \draw[] (13,-16.5) -- (14,-16);
    \draw[] (14,-16) -- (15,-16);
    \draw (7.5,-16) circle (0.5cm);
    \draw (15.5,-16) circle (0.5cm);
    \draw (11.5,-12.5) circle (0.5cm);
    \node[draw] at (12,-17.5) {h};
    \fill[black] (8,-16) circle (.1cm) node[align=left,   above]{};
    \fill[black] (9,-16) circle (.1cm) node[align=left,   above]{};
    \fill[black] (10,-15.5) circle (.1cm) node[align=left,   above]{};
    \fill[black] (10,-16.5) circle (.1cm) node[align=left,   above]{};
    \fill[black] (11.5,-14) circle (.1cm) node[align=left,   above]{};
    \fill[black] (11.5,-13) circle (.1cm) node[align=left,   above]{};
    \fill[black] (13,-15.5) circle (.1cm) node[align=left,   above]{};
    \fill[black] (13,-16.5) circle (.1cm) node[align=left,   above]{};
    \fill[black] (14,-16) circle (.1cm) node[align=left,   above]{};
    \fill[black] (15,-16) circle (.1cm) node[align=left,   above]{};
  \end{tikzpicture}
  \caption{The eight trivalent planar graphs of genus six, which are not realizable \cite{M19}, and remained unclassified up till now.}
  \label{fig:genus6_uncategorized}
\end{figure}
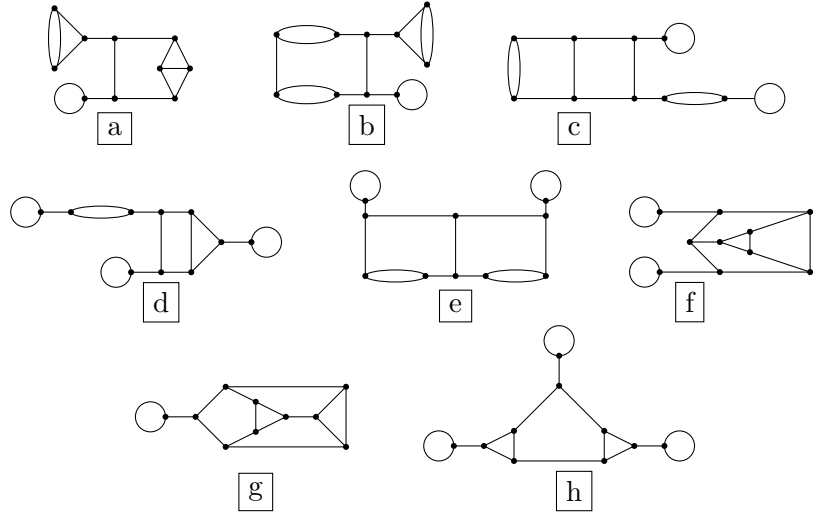

\section{Heavy cycle with one loop}

Let $P$ be a lattice polygon and $\partial P$ represent the boundary of $P$. A lattice polygon with all interior lattice points in a line is called a \emph{hyperelliptic} polygon. We refer to the convex hull of the interior lattice points of a lattice polygon $P$ as $\text{int}(P)$. We recall the following lemma from \cite{Joswig2020},

\begin{lemma}\label{lem:abz}
  Let $P$ contain a unimodular triangle face with vertices $a,b,z$ such that neither $a$ nor $b$ is a vertex of $P$, and $z$ is an interior lattice point.
  If $a$ and $b$ lie on $\partial P$ then either $a$ and $b$ lie on a common edge of $P$ or the lattice point $a+b-z$ is contained in $P$.
\end{lemma}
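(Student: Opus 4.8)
The plan is to normalize coordinates and then exploit that $\partial P$ is \emph{star-shaped} about the interior point $z$. Since $\triangle abz$ is unimodular, the lattice vectors $a-z$ and $b-z$ form a $\ZZ$-basis of $\ZZ^2$, so composing with a suitable affine unimodular transformation --- which sends lattice polygons to lattice polygons and preserves the face lattice as well as the set of interior lattice points --- we may assume $z=(0,0)$, $a=(1,0)$ and $b=(0,1)$. Then $a+b-z=(1,1)$, and it remains to show that if $a$ and $b$ do not lie on a common edge of $P$, then $(1,1)\in P$.

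Assume henceforth that $a$ and $b$ share no edge. Since $a\in\partial P$ is not a vertex, it lies in the relative interior of a unique edge $e_a$; let $v_a=(p,q)$ be the primitive direction vector of $e_a$, oriented so that moving from $a$ in direction $v_a$ traverses $\partial P$ counterclockwise about $z$. Because $a$ lies on the positive $x$-axis, this motion points into the upper half-plane, so $q>0$; and because $a$ is in the relative interior of $e_a$, the neighbouring lattice point $a+v_a$ lies on $e_a\subseteq P$.

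The crucial step is to rule out $p\le-1$. If $p\le-1$ then $1/|p|\in(0,1]$, so $x_0:=a+\tfrac{1}{|p|}\,v_a=\bigl(0,\,q/|p|\bigr)$ lies on the segment $[a,a+v_a]\subseteq e_a\subseteq\partial P$ and on the open ray from $z$ through $b$, namely the positive $y$-axis. A ray issuing from an interior point of a convex body meets the boundary in exactly one point, so $x_0=b=(0,1)$; hence $q=|p|$, and primitivity of $v_a$ forces $v_a=(-1,1)$. Then $e_a$ lies on the line $\{x+y=1\}$, which is a supporting line of $P$ through $b$, so $b$ lies on the edge $e_a$ --- contradicting that $a$ and $b$ share no edge. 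Hence $p\ge0$.

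It remains to treat the cases $p=0$ and $p\ge1$. If $p=0$, primitivity gives $v_a=(0,1)$, so $(1,1)=a+v_a\in e_a\subseteq P$. If $p\ge1$, then $a$, $b$ and $a+v_a=(1+p,q)$ all lie in $P$, and $(1,1)\in\conv\{a,b,a+v_a\}$: the line $\{x=1\}$ meets this triangle in the segment from the vertex $a=(1,0)$ to the point $\bigl(1,\tfrac{p+q}{1+p}\bigr)$ on the opposite edge $[b,a+v_a]$, and $\tfrac{p+q}{1+p}\ge1$ because $q\ge1$, so $(1,1)$ lies on that segment. Either way $(1,1)\in P$. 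The main obstacle is the star-shapedness argument in the third paragraph: recognizing that the interior point $z$ prevents the edge $e_a$ from wrapping back across the ray $zb$. Once this is in hand, the conclusion follows from an elementary convexity computation that only needs the one additional lattice point $a+v_a$.
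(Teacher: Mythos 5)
Your argument is correct and complete. Note that the paper itself does not prove this lemma --- it only recalls it from \cite{Joswig2020} --- so there is no in-text proof to compare against; your normalization to $z=(0,0)$, $a=(1,0)$, $b=(0,1)$ and the subsequent convexity argument is exactly the kind of proof given in that reference. The one genuinely delicate point, ruling out a direction vector $v_a=(p,q)$ with $p\le -1$ for the edge through $a$, is handled cleanly by your observation that the segment $[a,a+v_a]\subseteq\partial P$ would then cross the ray from $z$ through $b$, forcing $b$ onto that edge; after that, $q\ge 1$ and the triangle $\conv\{a,b,a+v_a\}$ capture $(1,1)$ as you compute. (Two cosmetic remarks: in the $p\le-1$ case the detour through $v_a=(-1,1)$ is redundant, since $x_0=b\in e_a$ already contradicts the assumption; and the closing sentence about ``the main obstacle'' is commentary that should be cut from a final write-up.)
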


We state the following definitions of a \emph{heavy cycle} and a \emph{heavy cycle with two loops} from \cite{Joswig2020},

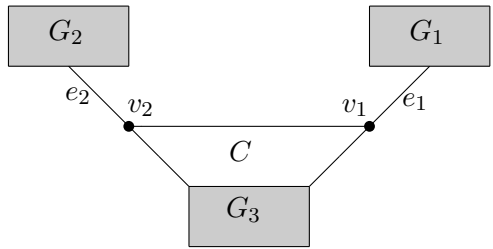
\begin{figure}[th]\centering
  \begin{tikzpicture}[scale=0.8]
    \fill[gray!40!white] (0.5,-0.5) rectangle (2.5,-1.5);
    \fill[gray!40!white] (-2.5,2.5) rectangle (-0.5,1.5);
    \fill[gray!40!white] (3.5,2.5) rectangle (5.5,1.5);
    \draw[] (0,0) -- (-0.5,0.5);
    \draw[] (0,0) -- (0.5,-0.5);
    \draw[] (-0.5,0.5) -- (3.5,0.5);
    \draw[] (-1.5,1.5) -- (-0.5,0.5);
    \draw[] (3.5,0.5) -- (4.5,1.5);
    \draw[] (0.5,-0.5) -- (2.5,-0.5);
    \draw[] (0.5,-0.5) -- (0.5,-1.5);
    \draw[] (2.5,-0.5) -- (2.5,-1.5);
    \draw[] (2.5,-1.5) -- (0.5,-1.5);
    \draw[] (2.5,-0.5) -- (3,0);
    \draw[] (3,0) -- (3.5,0.5);
    \draw[] (5.5,1.5) -- (3.5,1.5);
    \draw[] (5.5,2.5) -- (3.5,2.5);
    \draw[] (5.5,1.5) -- (5.5,2.5);
    \draw[] (3.5,1.5) -- (3.5,2.5);
    \draw[] (-2.5,1.5) -- (-0.5,1.5);
    \draw[] (-2.5,2.5) -- (-0.5,2.5);
    \draw[] (-2.5,1.5) -- (-2.5,2.5);
    \draw[] (-0.5,1.5) -- (-0.5,2.5);
    \fill[black] (-0.5,0.5) circle (.09cm) node[align=right, above]{};
    \fill[black] (-0.3,0.5) circle (.000001cm) node[align=right, above]{$v_{2}$};
    \fill[black] (3.5,0.5) circle (.09cm) node[align=right,   above]{$v_{1}\quad$};
    \fill[black] (1.6,-0.25) circle (.0001cm) node[align=left,   above]{$C\quad$};
    \fill[black] (-1.1,0.7) circle (.0001cm) node[align=left,   above]{$e_{2}\quad$};
    \fill[black] (4.5,0.6) circle (.0001cm) node[align=left,   above]{$e_{1}\quad$};
    \fill[black] (1.65,-1.25) circle (.0001cm) node[align=left,   above]{$G_{3}\quad$};
    \fill[black] (-1.3,1.7) circle (.0001cm) node[align=left,   above]{$G_{2}\quad$};
    \fill[black] (4.7,1.7) circle (.0001cm) node[align=left,   above]{$G_{1}\quad$};
  \end{tikzpicture}
  \caption{Graph with the heavy cycle $C$}
  \label{fig:heavy}
\end{figure}

\begin{definition}\label{def:heavy}
  We say that a cycle $C$ in a planar graph $G$ is \emph{heavy}, if
  \begin{enumerate}
  \item it has two nodes, $v_1$ and $v_2$, such that $v_i$ is incident with a cut edge $e_i$ connecting $v_i$ with a subgraph $G_i$ of positive genus;
  \item and there is a third subgraph, $G_3$, also of positive genus, which shares at least one node with the cycle $C$; cf.\ Figure~\ref{fig:heavy}.

  \end{enumerate}
\end{definition}

We recall some basic notations from \cite{Joswig2020} concerning heavy cycle. From \cite[Lemma 1]{Joswig2020} we know that there are split lines, $S_1$ and $S_2$, dual to the edges $e_1$ and $e_2$ of $G$. Also by \cite[Lemma 4]{Joswig2020}, $P$ is decomposed into a union of three lattice polygons $P_1$, $P_2$ and $P_{3}$ such that $\Delta$ induces triangulations of all three. We obtain triangulations $\Delta_1$, $\Delta_2$ and $\Delta_{3}$ such that the component $G_i$ is the skeleton of $\Delta_i$ for $i=1,2,$ and $G_{3} \cup C$ is realized by $\Delta_{3}$. $T_{1}$ and $T_{2}$ represent the triangular faces in $\Delta$ dual to $v_{1}$ and $v_{2}$ respectively. The polygon $P_{3}$ is referred as the \emph{heavy component} of $P$, and $\Delta_{3}$ is referred as the \emph{heavy component} of $\Delta$.

In \cite{Joswig2020} a structural result concerning heavy cycles is proved specifying the conditions under which a graph with a heavy cycle is realizable.

\begin{lemma}\label{lem:heavy}
  Suppose that $G$ has a heavy cycle with cut edges $e_1$ and $e_2$ as in Figure~\ref{fig:heavy}.
  Then the triangles $T_1$ and $T_2$ in $\Delta$ share an edge $[z,w]$, where $z$ is the interior lattice point dual to $C$, and the split lines $S_1$ and $S_2$ intersect in $w$, which is a vertex of $P_{3}$, and which lies in the boundary of $P$.
\end{lemma}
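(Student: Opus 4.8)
The plan is to work entirely inside the regular subdivision $\Delta$: first locate the interior lattice point $z$ together with the two triangles $T_1,T_2$, and then combine the compatibility of the two splits $S_1,S_2$ with Lemma~\ref{lem:abz} to rigidify the picture near $z$.

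First I would pin down $z$, $T_1$ and $T_2$. Under the duality the cycle $C$ corresponds to a unique interior lattice point $z$ of $P$; the triangles of $\Delta$ dual to the nodes of $C$ are exactly those that contain $z$ (the \emph{star} of $z$), and the edges of $C$ are dual to the edges of $\Delta$ incident to $z$. Hence $z$ is a vertex of both $T_1$ and $T_2$. Since each $v_i$ is trivalent with two of its edges on $C$ and its third edge $e_i$ leaving $C$, the edge of $T_i$ opposite $z$ — call it $\ell_i=[a_i,b_i]$ — is dual to $e_i$, so it lies on $S_i$. Because the split subdivision cut out by $S_i$ is coarsened by $\Delta$, every triangle of $\Delta$ lies in one of the two closed halfplanes bounded by $S_i$; as $z\notin S_i$ (else $T_i$ degenerates), every triangle containing $z$ lies on the side of $S_i$ containing $z$. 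Since moreover $P_1$ lies on the side of $S_1$ opposite $z$ and $P_2$ on the side of $S_2$ opposite $z$ ($S_i$ being dual to the cut edge $e_i$, which separates $G_i$ from the part of $G$ containing $C$), the whole star of $z$ lies in $P_3$, we have $z\in\interior(P_3)$, and $\ell_i$ sits on the edge $S_i\cap P$ of the convex polygon $P_3$. Finally, $S_1$ and $S_2$ are compatible splits, as both are refined by $\Delta$; hence $S_1\cap S_2$ does not meet $\interior(P)$, and $P=P_1\cup P_2\cup P_3$ has no point lying strictly on the non-$z$-side of both $S_1$ and $S_2$.

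The core step — and the one I expect to be the main obstacle — is to prove that $T_1$ and $T_2$ share an edge. I would argue by contradiction. If $T_1$ and $T_2$, which have $z$ in common, share no edge, then $\ell_1$ and $\ell_2$ have no common endpoint, so either $S_1\parallel S_2$ or they meet at a point that is not a common endpoint of $\ell_1,\ell_2$; in every case a nontrivial stretch of the link of $z$ (the boundary cycle of its star) runs through the interior of $P_3$ from $\ell_1$ to $\ell_2$. Applying Lemma~\ref{lem:abz} to suitable unimodular triangles along the chords $S_1\cap P$ and $S_2\cap P$ — reflecting the relevant interior point through the midpoint of a boundary edge of a triangle — produces lattice points of $P$ such as $a_1+b_1-z$, lying on the far ($P_1$-)side of $S_1$; iterating this reflection along the successive link edges toward $S_2$ yields a chain of lattice points of $P$ crossing $P_3$. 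Carried to its end, this chain must land either at a lattice point of $P$ on the non-$z$-side of both splits, which is impossible by the last observation of the previous paragraph, or at an interior lattice point of $P_3$ distinct from $z$ whose position is incompatible with $z$ being dual to $C$ and with $\ell_2$ lying on $S_2$. Either way we obtain a contradiction. This analysis, and the bookkeeping of small subpolygons it requires, parallels and extends the treatment of the \emph{heavy cycle with two loops} in \cite{Joswig2020}; it is the technical heart of the proof.

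To finish, once $T_1$ and $T_2$ are known to share an edge, that edge must be $[z,w]$, where $w$ is the common vertex of $T_1$ and $T_2$ other than $z$ and $z$ is, by construction, the interior lattice point dual to $C$. Since $w\in\ell_1\cap\ell_2\subseteq S_1\cap S_2$, the split lines meet at $w$; by compatibility $w$ cannot lie in $\interior(P)$, so $w\in\partial P$; and $w$ is the meeting point of the two edges $S_1\cap P$ and $S_2\cap P$ of $P_3$, hence a vertex of $P_3$. This gives exactly the assertion of the lemma.
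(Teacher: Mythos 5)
A preliminary remark: the paper does not prove Lemma~\ref{lem:heavy} at all --- it is quoted from \cite{Joswig2020} --- so your attempt has to be measured against the proof in that reference rather than against anything in this text. Your first and last paragraphs are sound and match the standard setup there: $z$ is a vertex of both $T_1$ and $T_2$, the edge $\ell_i$ of $T_i$ opposite $z$ is the split edge lying on $S_i$ with endpoints on $\partial P$, and once adjacency of $T_1$ and $T_2$ in $\Delta$ is known, the remaining claims ($w\in S_1\cap S_2$, $w\in\partial P$, $w$ a vertex of $P_3$) follow at once.

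The gap is exactly where you flag it: the claim that $T_1$ and $T_2$ share an edge is supported only by an ``iterated reflection along the link of $z$'' that cannot be carried out. Lemma~\ref{lem:abz} applies to a triangle $\conv\{a,b,z\}$ in which \emph{both} $a$ and $b$ lie on $\partial P$ and neither is a vertex of $P$; for the intermediate triangles of the star of $z$ between $T_1$ and $T_2$ these hypotheses generally fail, since their link vertices may be interior lattice points of $P$ (for instance points dual to other cycles of $G_3$) or vertices of $P$. So there is no well-defined chain of reflected lattice points, and your concluding dichotomy (``must land either at \dots or at \dots'') is asserted rather than derived. More tellingly, the sketch never uses the distinction between the two arcs of the link of $z$ joining $T_1$ to $T_2$: one of those arcs carries the attachment of $G_3$ and is genuinely non-empty, so any argument that treats the two arcs symmetrically and concludes ``no intermediate triangles'' proves too much. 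A correct proof must use that every triangle on the arc dual to the edge $v_1v_2$ of $C$ corresponds to a node that is smoothed away in forming the skeleton, so its edge opposite $z$ lies on $\partial P$ or cuts off a genus-zero piece; combining this with the interior lattice points $a_i+b_i-z$ supplied by Lemma~\ref{lem:abz} (available because $G_1$ and $G_2$ have positive genus, forcing $P_1$ and $P_2$ to contain interior lattice points so that the $a_i,b_i$ cannot lie on a common edge of $P$) and with convexity of $P$, one squeezes any intermediate triangle out of existence. That convexity computation is the actual content of the lemma, and it is missing from your proposal.
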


\begin{definition}
  We say that a connected trivalent planar graph $G$ has a \emph{heavy cycle with two loops} if it has the form as described in Figure \ref{fig:heavy-two-loops}, where $G_{3}$ represents a subgraph of positive genus.	
\end{definition} 
 
\begin{figure}[ht]\centering
  \begin{tikzpicture}[scale=0.9]
    \coordinate (v2) at (-0.5,0.5);
	\coordinate (v1) at (3.5,0.5);
	\coordinate (u2) at (-1.5,1.5);
	\coordinate (u1) at (4.5,1.5);
	
	\fill[gray!40!white] (0.5,-0.5) rectangle (2.5,-1);
	\draw[] (0,0) -- (-0.5,0.5);
	\draw[] (-0.5,0.5) -- (3.5,0.5);
	\draw[] (0,0) -- (0.5,-0.5);
	\draw[] (-1.5,1.5) -- (-0.5,0.5);
	\draw[] (3.5,0.5) -- (4.5,1.5);
	\draw[] (-1.85,1.85) circle (0.5cm);
	\draw[] (0.5,-0.5) -- (2.5,-0.5);
	\draw[] (0.5,-0.5) -- (0.5,-1);
	\draw[] (2.5,-0.5) -- (2.5,-1);
	\draw[] (2.5,-1) -- (0.5,-1);
	\draw[] (2.5,-0.5) -- (3,0);
	\draw[] (3,0) -- (3.5,0.5);
	\draw[] (4.85,1.85) circle (0.5cm);
	
    \node at ($(v2)$) [above right=-0.1pt] {$v_{2}$};
	\node at ($(v1)$) [above left=-0.1pt] {$v_{1}$};
	
	\fill[black] (-0.5,0.5) circle (.09cm);
	\fill[black] (3.5,0.5) circle (.09cm);
	\fill[black] (-1.25,0.7) circle (.0001cm) node[align=left,   above]{$e_{2}$};
	\fill[black] (4.3,0.7) circle (.0001cm) node[align=left,   above]{$e_{1}$};
	\fill[black] (4.5,1.5) circle (.1cm) node[align=left,   above]{};
	\fill[black] (-1.5,1.5) circle (.1cm) node[align=left,   above]{};
	\fill[black] (1.65,-0.2) circle (.0001cm) node[align=left,   above]{$C\quad$};
	\fill[black] (1.65,-1.05) circle (.0001cm) node[align=left,   above]{$G_{3}\quad$};
	\end{tikzpicture}
	\caption{Heavy cycle with two loops}
	\label{fig:heavy-two-loops}
\end{figure}
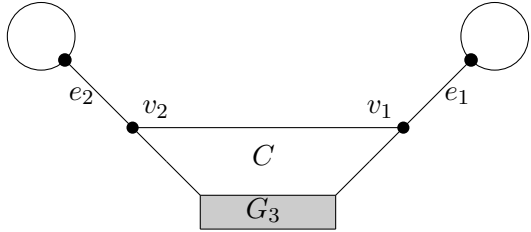

Using Lemma \ref{lem:heavy} in \cite{Joswig2020}, the following forbidden criterion is established,

\begin{theorem}\label{thm:heavy-two-loops}
  Suppose $G$ is a graph with a heavy cycle $C$ and two loops with cut edges, $e_{1}$ and $e_{2}$, as in Figure \ref{fig:heavy-two-loops}.
  Then the heavy component $P_{3}$ can have at most three interior lattice points, and these lie on the line spanned by the edge $[z,w] \in \Delta$, where $z$ is the interior lattice point dual to $C$, and $w$ is the intersection point of the split edges $s_{1}$ and $s_{2}$.
  In particular, $P_{3}$ is hyperelliptic and $g\leq 5$.
\end{theorem}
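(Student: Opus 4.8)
The plan is to sharpen the structural conclusion of Lemma~\ref{lem:heavy} into a complete description of the heavy component, using in an essential way that the two outer subgraphs $G_1$ and $G_2$ are loops and hence have genus exactly one. First I would extract the local picture around $w$. By Lemma~\ref{lem:heavy} the faces $T_1=[z,w,p]$ and $T_2=[z,w,q]$ of $\Delta$ share the edge $[z,w]$, so $p$ and $q$ lie on opposite sides of the line $L:=\overline{zw}$; moreover $w$ is a vertex of $P_3$, $w\in\partial P$, and $w=S_1\cap S_2$. Since $G$ is trivalent and $C$ is dual to $z$, the two edges of $C$ meeting the node $v_i$ are exactly the duals of the two edges of $T_i$ incident to $z$, so the cut edge $e_i$ is dual to the third edge of $T_i$: this is $[w,p]$ for $i=1$ and $[w,q]$ for $i=2$. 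Therefore $[w,p]\subseteq S_1$ and $[w,q]\subseteq S_2$; in particular the two edges of $P_3$ emanating from $w$ run along the split lines, and $T_1,T_2$ are the only faces of $\Delta_3$ incident to $w$.

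Next I would isolate the candidate interior points. Set $v=z-w$ and $z^{(j)}=w+jv$, so that $z^{(1)}=z$ and $[w,z]$ is a primitive lattice segment. Because $S_1$ and $S_2$ are lines through $w$ distinct from $L$, the open ray $\{w+tv:t>0\}$ stays strictly on the $P_3$-side of both split lines, so every $z^{(j)}$ that lies in $P$ already lies in the interior of $P_3$, and these are the only lattice points of $L$ interior to $P_3$. The theorem then reduces to two assertions: (a) $P_3$ has no interior lattice point off $L$; and (b) $z^{(4)}\notin P$, so that at most $z^{(1)},z^{(2)},z^{(3)}$ survive.

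Both (a) and (b) I would attack by combining the genus-one restriction with Lemma~\ref{lem:abz}. Let $z_i$ denote the unique interior lattice point of $P_i$ for $i=1,2$. The face of $\Delta$ glued to $T_1$ across $[w,p]\subseteq S_1$ lies in $P_1$, and since $\Delta_1$ realizes the single loop $G_1$ with $e_1$ attached at the node dual to this face, that node has degree three inside $G_1\cup\{e_1\}$, which forces the face to be $[w,p,z_1]$; symmetrically the face across $[w,q]\subseteq S_2$ is $[w,q,z_2]$. For a hypothetical interior lattice point $r$ of $P_3$ off $L$ — or for the point $z^{(4)}$ — the idea is to pick a unimodular face of $\Delta$ with one vertex an interior lattice point near $r$ (or near $z^{(3)}$) and with its other two vertices slid out onto $\partial P$ along $S_1$, along $S_2$, or along $\partial P$, arranged so as to be non-vertices of $P$ lying on distinct edges of $P$; then Lemma~\ref{lem:abz} produces a further lattice point of $P$ whose position forces it into $P_1$ or into $P_2$, contradicting that each of those polygons has only the single interior point $z_1$, resp.\ $z_2$. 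I expect this step to be the main obstacle: it needs a case distinction according to whether $p$ and $q$ are vertices of $P$ or lie in the relative interior of edges of $P$, and according to which side of $L$ the stray point $r$ sits on, all while keeping the one-interior-point bookkeeping for $P_1$ and $P_2$ consistent. The outcome is that every interior lattice point of $P_3$ lies on the line spanned by $[z,w]$ and that there are at most three of them.

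To finish, collinearity of the interior lattice points of $P_3$ makes $P_3$ hyperelliptic, and we have just bounded their number by three. Since $e_1$ and $e_2$ are cut edges, the genus of $G$ is the sum of the genera of $G_1$, $G_2$ and $G_3\cup C$; the first two equal one and the last equals the number of interior lattice points of $P_3$, so $g\le 1+1+3=5$.
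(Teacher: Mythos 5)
Your frame is sound and matches the intended one: Lemma~\ref{lem:heavy} supplies the shared edge $[z,w]$ and the intersection of the split lines at $w$, the loop components $P_1,P_2$ each carry a unique interior lattice point, and the closing count $g\le 1+1+3=5$ is correct once your claims (a) and (b) are established. But (a) and (b) \emph{are} the theorem, and you do not prove them: you describe ``the idea'' of choosing a unimodular face near a hypothetical stray interior point and then concede that you ``expect this step to be the main obstacle.'' That is a genuine gap, and the mechanism you sketch is also shaky as stated, because Lemma~\ref{lem:abz} applies to faces that actually occur in the given triangulation $\Delta$; you cannot ``pick a unimodular face \ldots with its other two vertices slid out onto $\partial P$'', since $\Delta$ is handed to you, not constructed, and nothing guarantees it contains a face in the position your argument needs.

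The statement itself is quoted from \cite{Joswig2020}, so this paper contains no proof of it, but the method is visible in the paper's proof of the analogous Theorem~\ref{thm:heavy-one-loop}: apply Lemma~\ref{lem:abz} only to the two faces $T_1=[z,p,w]$ and $T_2=[z,q,w]$ guaranteed by Lemma~\ref{lem:heavy}, obtaining the lattice points $r=p+w-z$ and $r'=q+w-z$ in $P$. Each must be an \emph{interior} point of the corresponding loop component (if it lay on $\partial P$, that component would be trapped between two adjacent parallel lattice lines and would have no interior lattice point at all), hence coincides with the unique interior point $z_1$, resp.\ $z_2$. Convexity of $P$ together with $r$ and $r'$ being interior then forces every interior lattice point of $P_3$ onto the line spanned by $[z,w]$ --- your claim (a) --- and a second convexity argument (a fourth collinear interior point would squeeze $P_1$ into a triangle with no interior lattice point) yields (b). Substituting this two-applications-of-Lemma~\ref{lem:abz}-plus-convexity argument for your sketched step would complete your write-up.
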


Inspired from these definitions, we define a graph with a \emph{heavy cycle with one loop},

\begin{definition}
We say that a planar, trivalent graph $G$ has a \emph{heavy cycle with one loop} if it is of the form shown in Figure \ref{fig:heavy-one-loop}, where $G_{2}$ and $G_{3}$ represent subgraphs of positive genus.
\end{definition}

\begin{figure}[H]\centering
  \begin{tikzpicture}[scale=1.1]
    \coordinate (v2) at (-0.5,0.5);
	\coordinate (v1) at (3.5,0.5);
	\coordinate (u2) at (-1.5,1.5);
	\coordinate (u1) at (4.5,1.5);
	
	\fill[gray!40!white] (0.5,-0.5) rectangle (2.5,-1);
	\fill[gray!40!white] (-2,2.5) rectangle (-1.5,1.5);
	\draw[] (0,0) -- (-0.5,0.5);
	\draw[] (-0.5,0.5) -- (3.5,0.5);
	\draw[] (0,0) -- (0.5,-0.5);
	\draw[] (-1.5,1.5) -- (-0.5,0.5);
	\draw[] (3.5,0.5) -- (4.5,1.5);
	\draw[] (0.5,-0.5) -- (2.5,-0.5);
	\draw[] (0.5,-0.5) -- (0.5,-1);
	\draw[] (2.5,-0.5) -- (2.5,-1);
	\draw[] (2.5,-1) -- (0.5,-1);
	\draw[] (2.5,-0.5) -- (3,0);
	\draw[] (3,0) -- (3.5,0.5);
	\draw[] (-1.5,1.5) -- (-2,1.5);
	\draw[] (-2,1.5) -- (-2,2.5);
	\draw[] (-2,2.5) -- (-1.5,2.5);
	\draw[] (-1.5,1.5) -- (-1.5,2.5);
	\draw[] (4.85,1.85) circle (0.5cm);
	
    \node at ($(v2)$) [above right=-0.1pt] {$v_{2}$};
	\node at ($(v1)$) [above left=-0.1pt] {$v_{1}$};
	
	\fill[black] (-0.5,0.5) circle (.09cm);
	\fill[black] (3.5,0.5) circle (.09cm);
	\fill[black] (-1.25,0.7) circle (.0001cm) node[align=left,   above]{$e_{2}$};
	\fill[black] (4.3,0.7) circle (.0001cm) node[align=left,   above]{$e_{1}$};
	\fill[black] (4.5,1.5) circle (.1cm) node[align=left,   above]{};
	\fill[black] (1.65,-0.2) circle (.0001cm) node[align=left,   above]{$C\quad$};
	\fill[black] (1.65,-1.05) circle (.0001cm) node[align=left,   above]{$G_{3}\quad$};
	\fill[black] (-1.55,1.75) circle (.0001cm) node[align=left,   above]{$G_{2}\quad$};
	\end{tikzpicture}
	\caption{Heavy cycle with one loop}
	\label{fig:heavy-one-loop}
\end{figure}
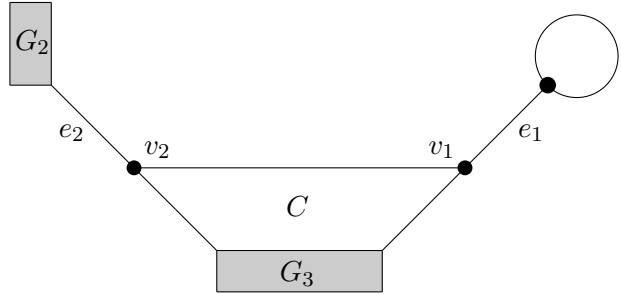

Similar to Theorem \ref{thm:heavy-two-loops} we prove the first new forbidden criterion in this article, along with additional conditions in the specific case of $g = 6$,

\begin{theorem}\label{thm:heavy-one-loop}
Suppose $G$ is a tropically planar graph with a heavy cycle with one loop as shown in Figure \ref{fig:heavy-one-loop}, then the heavy component $P_{3}$ is hyperelliptic and can have at most three interior lattice points. Also, $P_{2}$ can have at most three interior lattice points. In the case when genus $g = 6$ and $g(P_{2}) = 2$, $P_{2}$ is hyperelliptic and the triangulation restricted to $P_{2}$, i.e, $\Delta_{2}$ cannot have a nontrivial split. In particular, genus of $G$ can be at most seven. 
\end{theorem}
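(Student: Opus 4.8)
The plan is to reduce everything to the structural result of Lemma~\ref{lem:heavy}, to inherit the bound on the heavy component from the proof of Theorem~\ref{thm:heavy-two-loops}, and then to do the genuinely new work on the component $P_{2}$ and on the $g=6$ refinement. First I would record that a graph with a heavy cycle with one loop does satisfy Definition~\ref{def:heavy}: the cycle $C$ has nodes $v_{1},v_{2}$ with cut edges $e_{1}$ (to the loop $G_{1}$, of genus $1>0$) and $e_{2}$ (to $G_{2}$, of positive genus by hypothesis), and $G_{3}$ is a third positive-genus subgraph meeting $C$. Hence Lemma~\ref{lem:heavy} applies and yields $T_{1},T_{2}$ sharing the edge $[z,w]$ with $z$ dual to $C$ and $w\in\partial P$ a vertex of $P_{3}$, and $S_{1}\cap S_{2}=\{w\}$. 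I would also fix the decomposition $P=P_{1}\cup P_{2}\cup P_{3}$ and the induced triangulations $\Delta_{1},\Delta_{2},\Delta_{3}$, with $\Delta_{1}$ realizing the loop $G_{1}$ (so $g(P_{1})=1$), $\Delta_{2}$ realizing $G_{2}$, and $\Delta_{3}$ realizing $G_{3}\cup C$; additivity of genus gives $g=g(P_{1})+g(P_{2})+g(P_{3})=1+g(P_{2})+g(P_{3})$, and since $G_{3}$ has positive genus, $g(P_{3})=1+g(G_{3})\ge 2$.

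For the heavy component I would observe that the argument bounding $P_{3}$ in Theorem~\ref{thm:heavy-two-loops} uses only the output of Lemma~\ref{lem:heavy} (the location of $w$ and the triangles $T_{1},T_{2}$), together with Lemma~\ref{lem:abz} applied to the triangles of $\Delta_{3}$ adjacent to $[z,w]$ and with convexity of $P_{3}$; it nowhere uses that $e_{2}$ leads to a loop. Concretely, if $P_{3}$ had an interior lattice point off the line $\mathrm{aff}\{z,w\}$, convexity of $P_{3}$ and the boundary position of $w$ would produce a unimodular triangle $[a,b,z]$ of $\Delta_{3}$ with $a,b\in\partial P$ lying on no common edge and with $a+b-z\notin P$, contradicting Lemma~\ref{lem:abz}. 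So all interior lattice points of $P_{3}$ lie on $\mathrm{aff}\{z,w\}$, $P_{3}$ is hyperelliptic, and the same length estimate as in Theorem~\ref{thm:heavy-two-loops} bounds their number by three.

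Next, and this is the main new point, I would bound $g(P_{2})$ using the \emph{minimality} of $P_{1}$. Since $\Delta_{1}$ realizes a loop, $P_{1}$ has exactly one interior lattice point; feeding this into Lemma~\ref{lem:heavy} and applying Lemma~\ref{lem:abz} inside $\Delta_{1}$ to $T_{1}=[z,a_{1},w]$ should force $w$ to be a \emph{vertex of $P$}, so that $S_{1}$ and $S_{2}$ are chords of $P$ emanating from the vertex $w$ and the edge $[a_{2},w]$ of $T_{2}$ (carried by $S_{2}$) is an edge of $P_{2}$ at the vertex $w$ of $P_{2}$. With this rigidity I would rerun the argument of the previous paragraph on $P_{2}$: an interior lattice point of $P_{2}$ too far from $[a_{2},w]$ would, by convexity of $P_{2}$ and the boundary position of $w$, produce a triangle of $\Delta_{2}$ violating Lemma~\ref{lem:abz}, confining the interior lattice points of $P_{2}$ and giving $g(P_{2})\le 3$. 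The hard part is exactly the claim that $w$ must be a vertex of $P$ — this is where the one-loop hypothesis, rather than a general positive-genus $G_{1}$, is essential — and then extracting the sharp bound $3$ rather than merely ``bounded''.

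Finally, for the refinement when $g=6$: then $g(P_{2})+g(P_{3})=5$ with $2\le g(P_{3})\le 3$ and $g(P_{2})\le 3$, so $\{g(P_{2}),g(P_{3})\}=\{2,3\}$; in the case $g(P_{2})=2$ the polygon $P_{2}$ is hyperelliptic for the trivial reason that two lattice points are always collinear, and $g(P_{3})=3$, so $P_{3}$ is hyperelliptic with exactly three collinear interior lattice points. It remains to exclude a nontrivial split of $\Delta_{2}$, which I would do by contradiction: such a split cuts $P_{2}$ into two subpolygons of genus $1$, dual to a cut edge $e'$ of $G_{2}$, producing in $\Delta$ three interacting split lines $S_{1},S_{2},S'$ over a genus-$6$ polygon whose interior lattice points are pinned into the pattern $(1,2,3)$ along a prescribed line; I would then show that this either exposes a heavy cycle with two loops (contradicting Theorem~\ref{thm:heavy-two-loops}) or forces a subgraph of $G$ matching a previously known forbidden pattern (sprawling, crowded, or TIE-fighter), contradicting tropical planarity of $G$. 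Assembling the three bounds gives $g=1+g(P_{2})+g(P_{3})\le 1+3+3=7$. The principal obstacle throughout is the passage from ``the heavy component is controlled'' (the easy direction, inherited from Theorem~\ref{thm:heavy-two-loops}) to controlling the \emph{non-loop} side $P_{2}$, whose rigidity has to be bootstrapped from the single loop on the $e_{1}$-side.
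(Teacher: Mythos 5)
Your reduction of the $P_3$ statement to Lemma~\ref{lem:heavy} and Lemma~\ref{lem:abz} is essentially what the paper does: applying Lemma~\ref{lem:abz} to $T_1=\conv\{(0,0),(1,0),(0,1)\}$ and $T_2=\conv\{(0,0),(-1,-k),(0,1)\}$ produces interior lattice points $(1,1)$ and $(-1,-k+1)$, which squeeze $P_3$ between $x=-1$ and $x=1$ and hence make it hyperelliptic; the bound $g(P_3)\le 3$ then comes from the fact that a fourth interior point $(0,-3)$ would trap $P_1$ inside $\conv\{(1,0),(0,1),(1,3)\}$, which has no interior lattice point. Note that this last step uses the loop on the $e_1$-side in an essential way, so "inheriting" the bound from Theorem~\ref{thm:heavy-two-loops} is only legitimate if you check that the two-loop argument for the bound of three uses just one of its loops; you do not check this.

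The two genuinely new parts of the theorem are where your plan breaks down. First, your route to $g(P_2)\le 3$ rests on the claim that $w$ must be a vertex of $P$, which you yourself flag as the hard part; but this claim is not forced and is false in the paper's own extremal configuration (Figure~\ref{fig:cycle_one_loop_g=6}, where $w=(0,1)$ lies in the relative interior of the edge of $P$ joining $(2,3)$ to $(-2,-1)$, those three points being collinear). The paper instead argues by explicit convexity: since $(1,1)$ and $(0,-2)$ are interior, $P_1$ is pinned to $\conv\{(1,0),(0,1),(2,3)\}$, and the region left over for $P_2$ is contained in the triangle $\conv\{(0,1),(-1,-3),(-4,-3)\}$, which has exactly three interior lattice points. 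Second, your argument for the absence of a nontrivial split of $\Delta_2$ when $g=6$ and $g(P_2)=2$ --- reducing to a heavy cycle with two loops or to a ``previously known forbidden pattern'' --- is circular: the graphs this clause is designed to eliminate (graphs `c' and `d' of Figure~\ref{fig:genus6_uncategorized}) are precisely graphs that evade every previously known criterion, which is why they remained unclassified, and a genus-two $G_2$ with a cut edge hanging off $e_2$ is not a heavy cycle with two loops in the sense of Figure~\ref{fig:heavy-two-loops}. The paper's argument is again a concrete computation: hyperellipticity of $P_2$ forces $(-1,-1)$ to be interior, hence $x\ge -2$ on $P$, hence $P_2\subseteq\conv\{w,(-2,-1),(-2,-3),(-1,-3)\}$, and one checks directly that no triangulation of this quadrilateral or of any of its subpolygons admits a split edge separating two positive-genus pieces. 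Without these two explicit constructions your proposal does not establish the sharp bound on $g(P_2)$, the split-freeness, or therefore the bound $g\le 7$.
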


\begin{proof}
  \begin{figure}\centering
    \begin{tikzpicture}[scale=0.75]
      \coordinate (z) at (0,0);
      \coordinate (p1) at (1,0);
      \coordinate (q1) at (0,1);
      \coordinate (p2) at (0,1);
      \coordinate (q2) at (-1,-2);
      \coordinate (u1) at (1,1);
      \coordinate (v1) at (-1,-1);
      \coordinate (k1) at (0,-1);
      
      \filldraw[triangle] (z) -- (p1) -- (q1) -- cycle;
      \filldraw[triangle] (z) -- (p2) -- (q2) -- cycle;
      
      \node[above right] at (z) {$T_1$};
      \filldraw[lattice] (z) circle (1.5pt);
      \draw[] (0,0) -- (0,1);
      \draw[] (0,0) -- (1,0);
      \draw[] (0,1) -- (1,0);
      \draw[] (0,1) -- (-1,-2);
      \draw[] (0,0) -- (-1,-2);
      \draw[dashed] (0,1) -- (2,1);
      \draw[dashed] (1,0) -- (2,0);
      \draw[dashed] (-1,-2) -- (-1,-3);
      \draw[dashed] (1,0) -- (1,-3);
      \draw[dashed] (1.4,1.5) -- (-0,-3.8);
      \draw[dashed] (-1.5,-1) -- (0,-4);
      \fill[black] (0.05,0) circle (.000005cm) node[align=right,   below]{$z$};
      \fill[black] (0,1) circle (.000005cm) node[align=right,   above]{$w$};
      \fill[black] (1,1) circle (.000005cm) node[align=right,   above]{$r$};
      \fill[black] (0.7,0.3) circle (.000005cm) node[align=right,   above]{$s_{1}$};
      \fill[black] (-0.65,-0.5) circle (.000005cm) node[align=right,   above]{$s_{2}$};
      \fill[black] (-1,-1) circle (.000005cm) node[align=right,   above]{$r'$};
      \fill[black] (0.2,-1.2) circle (.000005cm) node[align=right,   below]{$P_{3}$};
      \fill[black] (-1.2,-1.9) circle (.000005cm) node[align=left,   below]{$p_2$};
      \fill[black] (1.25,-0.45) circle (.0000005cm) node[align=left,   above]{$p_1$};
      \foreach \x in {-2,...,2}{
        \foreach \y in {-4,...,3}{
          \filldraw[lattice] (\x,\y) circle (0.6pt);
        }
      }
      \filldraw[boundary] (p1) circle (1.5pt);
      \filldraw[boundary] (q1) circle (1.5pt);
      \filldraw[boundary] (p2) circle (1.5pt);
      \filldraw[boundary] (q2) circle (1.5pt);
      \filldraw[lattice] (u1) circle (1.5pt);
      \filldraw[lattice] (v1) circle (1.5pt);
      \filldraw[lattice] (k1) circle (1.5pt);
    \end{tikzpicture}
    \hspace{1cm}
    \begin{tikzpicture}[scale=0.75]
    \coordinate (z) at (0,0);
    \coordinate (p1) at (1,0);
    \coordinate (q1) at (0,1);
    \coordinate (p2) at (-1,-5);
    \coordinate (q2) at (0,1);
    \coordinate (u1) at (1,1);
    \coordinate (v1) at (-1,-4);
    \coordinate (k1) at (0,-1);
    \coordinate (k2) at (0,-2);
    \coordinate (k3) at (0,-3);
    
    \filldraw[triangle] (z) -- (p1) -- (q1) -- cycle;
    
    \node[above right] at (z) {$T_1$};
    \filldraw[lattice] (z) circle (1.5pt);
    \draw[] (0,0) -- (0,1);
    \draw[] (0,0) -- (1,0);
    \draw[] (0,1) -- (1,0);
    \draw[dashed] (1,0) -- (-0.25,-3.75);
    \draw[dashed] (1,0) -- (2.25,3.75);
    \draw[dashed] (0,1) -- (2.5,3.5);
    \fill[black] (0,0) circle (.000005cm) node[align=right,   below]{$z$};
    \fill[black] (0,1) circle (.000005cm) node[align=right,   above]{$w$};
    \fill[black] (1,1) circle (.000005cm) node[align=right,   above]{$r$};
    \fill[black] (1,2) circle (.000005cm) node[align=right,   above]{$q$};
    \fill[black] (0.7,0.3) circle (.000005cm) node[align=right,   above]{$s_{1}$};
    \fill[black] (1.3,-0.2) circle (.0000005cm) node[align=left,   above]{$p_1$};
    \foreach \x in {-2,...,2}{
    	\foreach \y in {-4,...,3}{
    		\filldraw[lattice] (\x,\y) circle (0.6pt);
    	}
    }
    \filldraw[boundary] (p1) circle (1.5pt);
    \filldraw[boundary] (q1) circle (1.5pt);
    \filldraw[lattice] (u1) circle (1.5pt);
    \filldraw[lattice] (k1) circle (1.5pt);
    \filldraw[lattice] (k2) circle (1.5pt);
    \filldraw[lattice] (k3) circle (1.5pt);
    \end{tikzpicture}
    \caption{This illustrates Theorem \ref{thm:heavy-one-loop}: general sketch (left) and the case when $g(P') \geq 4$ (right), which is impossible}
    \label{fig:cycle_two_loops}
  \end{figure}
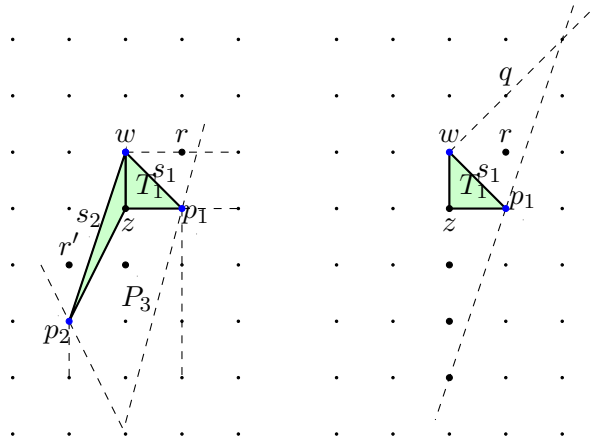

Let $T_{1} = \conv\{(0,0),(1,0),(0,1)\}$ and $T_{2}= \conv\{(0,0),(-1,-k),(0,1)\}$ be the
triangles dual to $v_{1}$ and $v_{2}$ respectively in $\Delta$ with the edges $s_{1}$(between the points $(1,0)$ and $(0,1)$) and $s_{2}$(between the points $(0,1)$ and $(-1,-k)$) being the split edges corresponding to the cut edges $e_{1}$ and $e_{2}$ in Figure \ref{fig:heavy-one-loop}. We let $z=(0,0)$ be the point corresponding to the heavy cycle $C$ and by the heavy cycle lemma we know that $T_{1}$ and $T_{2}$ share an edge and the split edges intersect at a point, in this case that point is $w = (0,1)$. This is illustrated in Figure ~\ref{fig:cycle_two_loops}. Invoking Lemma ~\ref{lem:abz} for $T_{1}$ we realize that the point $r = (1,1)$ is in $P$. If we consider the case that $r$ is in $\partial P$, that implies that the sub polygon of $P$ realizing the loop lies in between the parallel lines $y=0$ and $y=1$, which gives a contradiction as this does not contain any interior lattice point. Hence, $r$ lies in the the interior of $P$. Similarly, for $T_{2}= \conv\{(0,0),(-1,-k),(0,1)\}$, we obtain that the point $r' = (-1,-k+1)$ lies in the interior of $P$. Similar to the arguments used in the case of heavy cycle with two loops, we realize that when we invoke convexity of $P$ along with the condition that $r$ and $r'$ are interior points of $P$, then we obtain that the sub polygon realizing the subgraph $G_{2}$ lies between the lines $x=1$ and $x=-1$ illustrated in Figure \ref{fig:heavy-one-loop}, which implies that $P_{3}$ is hyperelliptic.

We now move on to show that $g(P_{3}) \leq 3$. We proceed by contradiction and assume $g(P')\geq 4$. By our assumption, $(0,-3)$ is an interior lattice point of $P_{3}$. We notice that the point $p_1 = (1,0)$, which is a boundary point, and $r = (1,1)$, which is an interior lattice point, both lie on the line $x=1$. We now consider the possibilities for the point $(1,2)$; either $(1,2)\in P_1$, in which case $(1,2)\in \partial P_1$ or the boundary edge at $w$ passes through a point in the open interval $((1,2),(1,1))$. Also, no point in  $\partial P_{1}$ can be present on the line $y=3x-3$ because $(0,-3)$ is an interior lattice point. Hence, we conclude that $P_{1}$ is contained in the triangle $\conv\{p_1,w,(1,3)\}$. However, this triangle is not valid because $(1,3)$ has been excluded; see Figure~\ref{fig:cycle_two_loops}. This provides the desired contradiction, and thus $g(P') \leq 3$. 

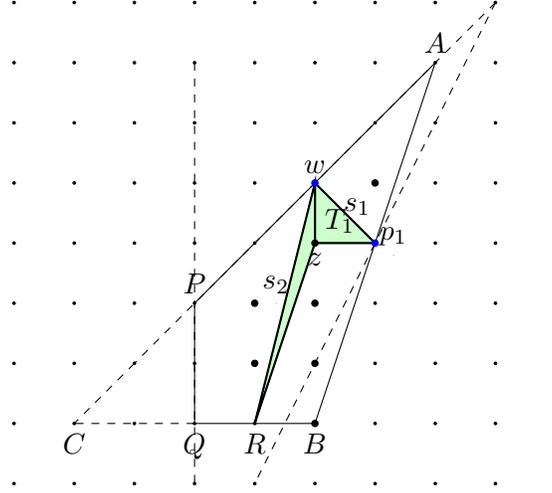
\begin{figure}[H]\centering
    \begin{tikzpicture}[scale=0.8]
    \coordinate (z) at (0,0);
    \coordinate (p1) at (1,0);
    \coordinate (q1) at (0,1);
    \coordinate (p2) at (-1,-5);
    \coordinate (q2) at (0,1);
    \coordinate (u1) at (1,1);
    \coordinate (v1) at (-1,-4);
    \coordinate (k1) at (0,-1);
    \coordinate (k2) at (0,-2);
    \coordinate (k3) at (0,-3);
    \coordinate (k4) at (-1,-3);
    \coordinate (k5) at (-1,-1);
    \coordinate (k6) at (-1,-2);

    \filldraw[triangle] (z) -- (p1) -- (q1) -- cycle;
    \filldraw[triangle] (z) -- (q2) -- (k4) -- cycle;
    
    \node[above right] at (z) {$T_1$};
    \filldraw[lattice] (z) circle (1.5pt);
    \draw[] (0,0) -- (0,1);
    \draw[] (0,0) -- (1,0);
    \draw[] (0,1) -- (1,0);
    \draw[] (0,1) -- (-1,-3);
    \draw[] (0,0) -- (-1,-3);
    \draw[] (2,3) -- (-2,-1);
    \draw[] (-2,-3) -- (-2,-1);
    \draw[] (-2,-3) -- (0,-3);
    \draw[] (0,-3) -- (2,3);
    \draw[dashed] (3,4) -- (-1,-4);
    \draw[dashed] (-3,-2) -- (-4,-3);
    \draw[dashed] (-2,-3) -- (-4,-3);
    \draw[dashed] (3,4) -- (-3,-2);
    \draw[dashed] (-2,3) -- (-2,-4);
    \fill[black] (0,0) circle (.000005cm) node[align=right,   below]{$z$};
    \fill[black] (2,3) circle (.000005cm) node[align=right,   above]{$A$};
    \fill[black] (0,-3) circle (.000005cm) node[align=right,   below]{$B$};
    \fill[black] (-4,-3) circle (.000005cm) node[align=right,   below]{$C$};
    \fill[black] (-2,-1) circle (.000005cm) node[align=left,   above]{$P$};
    \fill[black] (-2,-3) circle (.000005cm) node[align=left,   below]{$Q$};
    \fill[black] (-1,-3) circle (.000005cm) node[align=left,   below]{$R$};
    \fill[black] (0,1) circle (.000005cm) node[align=right,   above]{$w$};
    \fill[black] (0.7,0.3) circle (.000005cm) node[align=right,   above]{$s_{1}$};
    \fill[black] (-0.65,-1) circle (.000005cm) node[align=right,   above]{$s_{2}$};
    \fill[black] (1.3,-0.2) circle (.0000005cm) node[align=left,   above]{$p_1$};
    \foreach \x in {-5,...,4}{
    	\foreach \y in {-4,...,4}{
    		\filldraw[lattice] (\x,\y) circle (0.6pt);
    	}
    }
    \filldraw[boundary] (p1) circle (1.5pt);
    \filldraw[boundary] (q1) circle (1.5pt);
    \filldraw[lattice] (u1) circle (1.5pt);
    \filldraw[lattice] (k1) circle (1.5pt);
    \filldraw[lattice] (k2) circle (1.5pt);
    \filldraw[lattice] (k3) circle (1.5pt);
    \filldraw[lattice] (k5) circle (1.5pt);
    \filldraw[lattice] (k6) circle (1.5pt);
    \end{tikzpicture}
    \caption{This illustrates the case when $g=6$ and $g(P_{2}) = 2$}
    \label{fig:cycle_one_loop_g=6}
  \end{figure}

Now we show that $g(P) \leq 7$. We know that the points $r = (1,1)$ and $(0,-2)$ are interior points of $P$. Hence, no point in  $\partial P_{1}$ can be present on the line $y=2x-2$ because $(0,-2)$ is an interior lattice point. Again, $(1,2)$ is either in $\partial P$ or the boundary edge at $w$ passes through a point in the open interval $((1,2),(1,1))$. Therefore, we conclude that $P_{1}$ =$\conv\{p_{1},w, (2,3)\}$. Now, we try to find the maximal subpolygon $P_{2}$ (in terms of area)  that we can obtain given the above constraints. We realize that the maximal polygon in this case ${P_{2}}^{max} = \conv \{(0,1),(-1,-3),(-4,-3)\}$, which is shown as the triangle $wCR$ in Figure \ref{fig:cycle_one_loop_g=6}.

With the general statement proven, we now consider the specific case of genus $g = 6$. In the case that $g(P_{2}) = 2$, $P_{2}$ is hyperelliptic since all lattice polygons with $g \leq 2$ are hyperelliptic \cite{JB15}. We again proceed by constructing the maximal polygon $P$ in this case. Since $g(P_{2}) = 2$ and $P_{2}$ is hyperelliptic, we realize that the point $(-1,-1)$ is an interior point of $P$, because if it not in $P$ or is in $\partial P$, then it contradicts $g(P_{2}) = 2$. This implies that for a point $p=(x,y) \in P$, $x \geq -2$. This implies that the maximal polygon $P_{2}$ in this case is ${P_{2}}^{\text{max}} = \conv\{w,(-2,-1),(-2,-3),(-1,-3)\}$, which is depicted in the Figure \ref{fig:cycle_one_loop_g=6} as the quadrilateral $wPQR$. We realize that for $P_{2}$ and all its sub polygons, none of the corresponding triangulations $\Delta_{2}$ can posses a non trivial split edge, such that it divides $P_{2}$ into two sub polygons each of which has a positive genus. Hence, the proof.

\end{proof}

We state the forbidden criterion we obtain as a result of Theorem \ref{thm:heavy-one-loop},

\begin{corollary}
  Let $G$ be a planar trivalent graph of genus $g \geq 8$ such that $G$ has a heavy cycle with one loop. Then, $G$ cannot be tropically planar.
\end{corollary}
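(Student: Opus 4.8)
The proof is immediate from Theorem~\ref{thm:heavy-one-loop}, and the plan is simply to run that theorem in contrapositive form. Assume toward a contradiction that $G$ is tropically planar. Since $G$ has a heavy cycle with one loop, the subgraphs $G_2$ and $G_3$ attached to the cycle both have positive genus, so $G$ meets the hypotheses of Theorem~\ref{thm:heavy-one-loop} verbatim. That theorem then forces the genus of $G$ to satisfy $g \le 7$, which contradicts the standing assumption $g \ge 8$. Hence no planar trivalent graph of genus at least eight with a heavy cycle with one loop can be tropically planar.

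The only point that needs a word of justification is that the combinatorial pattern ``heavy cycle with one loop'' is exactly the input required by Theorem~\ref{thm:heavy-one-loop} — in particular, that the loop on one side and the cut edge $e_2$ on the other each force the relevant component ($G_3$, respectively $G_2$) to have positive genus. This is built into the definition accompanying Figure~\ref{fig:heavy-one-loop}, so the hypotheses transfer with no extra work. I do not anticipate any genuine obstacle: all of the geometric content — the decomposition of $P$ into $P_1$, $P_2$, $P_3$, the hyperellipticity of $P_3$, and the bounds of at most three interior lattice points on each of $P_2$ and $P_3$ — has already been established in the proof of Theorem~\ref{thm:heavy-one-loop}, and the corollary merely repackages the resulting genus bound as a forbidden pattern in the high-genus regime.
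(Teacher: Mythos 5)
Your argument is correct and is exactly the paper's intended route: the corollary is stated as an immediate consequence of Theorem~\ref{thm:heavy-one-loop}, whose final clause bounds the genus of any tropically planar graph with a heavy cycle with one loop by seven, and taking the contrapositive for $g \geq 8$ is all that is needed. The paper gives no separate proof, so your one-line deduction matches it precisely.
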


\begin{remark}
 Figure \ref{fig:g=6_realizable} shows the unimdoular triangulation which realizes a graph with a heavy cycle and one loop. Notice, that $G_{2}$ has no cut edges and hence $P_{2}$ has no nontrivial split.
\end{remark}

\begin{figure}[th]\centering
  \begin{tikzpicture}[scale=0.9]
    \draw[] (0,0) -- (1,0);
    \draw[] (0,0) -- (0,1);
    \draw[] (0,1) -- (1,0);
    \draw[] (1,0) -- (1,1);
    \draw[] (1,0) -- (1,2);
    \draw[] (1,1) -- (2,3);
    \draw[] (1,0) -- (2,3);
    \draw[] (0,1) -- (2,3);
    \draw[] (1,0) -- (2,3);
    \draw[] (0,1) -- (1,1);
    \draw[] (0,1) -- (2,3);
    \draw[] (0,1) -- (-1,-3);
    \draw[] (0,0) -- (-1,-3);
    \draw[] (0,0) -- (0,-1);
    \draw[] (0,-1) -- (1,0);
    \draw[] (1,0) -- (0,-2);
    \draw[] (0,-1) -- (0,-2);
    \draw[] (1,0) -- (0,-3);
    \draw[] (0,-2) -- (0,-3);
    \draw[] (0,-1) -- (-1,-3);
    \draw[] (0,-2) -- (-1,-3);
    \draw[] (0,-3) -- (-1,-3);
    \draw[] (0,1) -- (-2,-1);
    \draw[] (-2,-1) -- (-2,-3);
    \draw[] (-2,-3) -- (-1,-3);
    \draw[] (-1,-3) -- (-1,-2);
    \draw[] (-1,-2) -- (0,1);
    \draw[] (-1,-2) -- (-1,-1);
    \draw[] (-1,-2) -- (0,1);
    \draw[] (-1,-1) -- (0,1);
    \draw[] (-1,-1) -- (-1,0);
    \draw[] (-1,-1) -- (-2,-1);
    \draw[] (-1,-1) -- (-2,-2);
    \draw[] (-1,-2) -- (-2,-2);
    \draw[] (-1,-2) -- (-2,-3);
    \fill[black] (1,1) circle (.07cm);
    \fill[black] (0,0) circle (.07cm);
    \fill[black] (0,1) circle (.07cm);
    \fill[black] (1,1) circle (.07cm);    
    \fill[black] (1,2) circle (.07cm);
    \fill[black] (2,3) circle (.07cm);
    \fill[black] (0,-1) circle (.07cm);
    \fill[black] (0,-2) circle (.07cm);
    \fill[black] (-1,0) circle (.07cm);
    \fill[black] (-1,-1) circle (.07cm);
    \fill[black] (-1,-2) circle (.07cm);
    \fill[black] (-1,-3) circle (.07cm);
    \fill[black] (-2,-1) circle (.07cm);
    \fill[black] (-2,-2) circle (.07cm);
    \fill[black] (-2,-3) circle (.07cm);
    \fill[black] (1,0) circle (.07cm);
    \fill[black] (0,-3) circle (.07cm);
  \end{tikzpicture}
  \hspace{2cm}
   \begin{tikzpicture}[scale=0.7]
    \draw[] (0,0) -- (0,-2);
    \draw[] (0,-2) -- (2,-2);
    \draw[] (2,0) -- (2,-2);
    \draw[] (0,-4) -- (2,-4);
    \draw[] (0,-4) -- (0,-2);
    \draw[] (2,-4) -- (2,-2);
    \draw[] (2,-4) -- (2,-6);
    \draw[] (0,-4) -- (0,-5);
    \draw[] (0,-5) -- (-1,-7);
    \draw[] (0,-5) -- (1,-7);
    \draw (2,-6.5) circle (0.5cm);
    \draw (0,-7) ellipse (1cm and 0.2cm);
    \draw (1,0) ellipse (1cm and 0.2cm);
    \fill[black] (0,-4) circle (.1cm) node[align=left,   above]{};
    \fill[black] (0,-2) circle (.1cm) node[align=left,   above]{};
    \fill[black] (0,0) circle (.1cm) node[align=left,   above]{};
    \fill[black] (0,-5) circle (.1cm) node[align=left,   above]{};
    \fill[black] (-1,-7) circle (.1cm) node[align=left,   above]{};
    \fill[black] (1,-7) circle (.1cm) node[align=left,   above]{};
    \fill[black] (2,0) circle (.1cm) node[align=left,   above]{};
    \fill[black] (2,-2) circle (.1cm) node[align=left,   above]{};
    \fill[black] (2,-4) circle (.1cm) node[align=left,   above]{};
    \fill[black] (2,-6) circle (.1cm) node[align=left,   above]{};
  \end{tikzpicture}
  \caption{A unimodular triangulation of genus six with $g(P_{2}) = 2$ (left), corresponding skeleton with a heavy cycle with one loop with $G_{2}$ that does not have a cut edge (right)}
  \label{fig:g=6_realizable}
\end{figure}
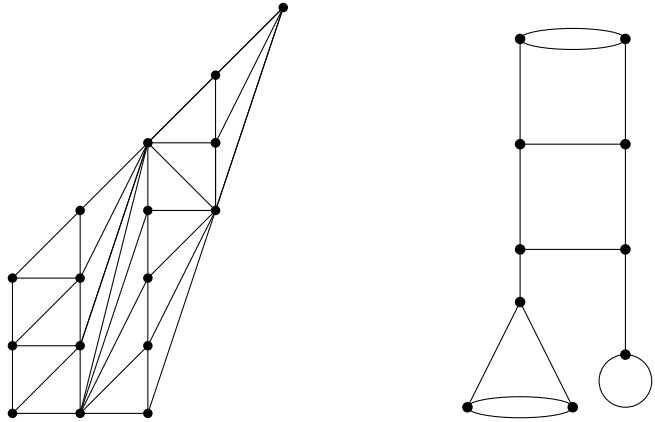

\section{Double heavy cycle}

We now define a \emph{double heavy cycle},

\begin{definition}\label{def:double_heavy}
  We say that cycles $C_{1},C_{2}$ in a planar graph $G$ are \emph{double heavy}, if
  \begin{enumerate}
  \item it has two nodes, $v_1$ and $v_2$, such that $v_i$ is incident with a cut edge $e_i$ connecting $v_i$ with a subgraph $G_i$ of positive genus;
  \item and there is a third subgraph, $G_3$, also of positive genus, which shares at least one node with the cycles $C_{1}$ and $C_{2}$; cf.\ Figure~\ref{fig:double_heavy}.

  \end{enumerate}
\end{definition}

\begin{figure}[H]\centering
  \begin{tikzpicture}
    \fill[gray!40!white] (0.5,-0.5) rectangle (2.5,-1.5);
    \fill[gray!40!white] (-2.5,2.5) rectangle (-0.5,1.5);
    \fill[gray!40!white] (3.5,2.5) rectangle (5.5,1.5);
    \draw[] (0,0) -- (-0.5,0.5);
    \draw[] (0,0) -- (0.5,-0.5);
    \draw[] (-0.5,0.5) -- (3.5,0.5);
    \draw[] (-1.5,1.5) -- (-0.5,0.5);
    \draw[] (3.5,0.5) -- (4.5,1.5);
    \draw[] (0.5,-0.5) -- (2.5,-0.5);
    \draw[] (0.5,-0.5) -- (0.5,-1.5);
    \draw[] (2.5,-0.5) -- (2.5,-1.5);
    \draw[] (2.5,-1.5) -- (0.5,-1.5);
    \draw[] (2.5,-0.5) -- (3,0);
    \draw[] (3,0) -- (3.5,0.5);
    \draw[] (5.5,1.5) -- (3.5,1.5);
    \draw[] (5.5,2.5) -- (3.5,2.5);
    \draw[] (5.5,1.5) -- (5.5,2.5);
    \draw[] (3.5,1.5) -- (3.5,2.5);
    \draw[] (-2.5,1.5) -- (-0.5,1.5);
    \draw[] (-2.5,2.5) -- (-0.5,2.5);
    \draw[] (-2.5,1.5) -- (-2.5,2.5);
    \draw[] (-0.5,1.5) -- (-0.5,2.5);
    \draw[] (1.6,0.5) -- (1.6,-0.5);
    \fill[black] (-0.5,0.5) circle (.09cm) node[align=right, above]{};
    \fill[black] (-0.3,0.5) circle (.000001cm) node[align=right, above]{$v_{2}$};
    \fill[black] (3.5,0.5) circle (.09cm) node[align=right,   above]{$v_{1}\quad$};
    \fill[black] (1.6,0.5) circle (.09cm) node[align=right,   above]{$v\quad$};
    \fill[black] (1,-0.25) circle (.0001cm) node[align=left,   above]{$C_{2}\quad$};
    \fill[black] (2.5,-0.25) circle (.0001cm) node[align=left,   above]{$C_{1}\quad$};
    \fill[black] (-1.1,0.7) circle (.0001cm) node[align=left,   above]{$e_{2}\quad$};
    \fill[black] (4.5,0.6) circle (.0001cm) node[align=left,   above]{$e_{1}\quad$};
    \fill[black] (1.65,-1.25) circle (.0001cm) node[align=left,   above]{$G_{3}\quad$};
    \fill[black] (-1.3,1.7) circle (.0001cm) node[align=left,   above]{$G_{2}\quad$};
    \fill[black] (4.7,1.7) circle (.0001cm) node[align=left,   above]{$G_{1}\quad$};
  \end{tikzpicture}
  \caption{Graph with double heavy cycles $C_{1}$ and $C_{2}$}
  \label{fig:double_heavy}
\end{figure}
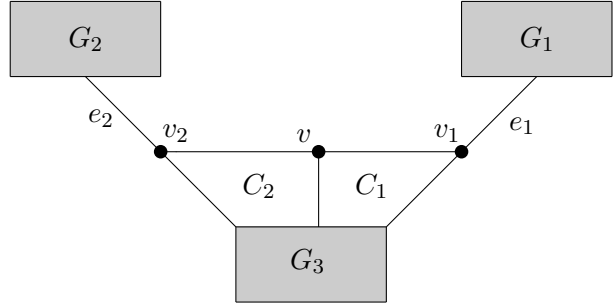

We fix some additional notation  concerning double heavy cycle. Let $T,T_{1}$ and $T_{2}$ denote the triangles in $\Delta$ dual to $v,v_{1}$ and $v_{2}$ in $G$ respectively. Let $z_{1}$ and $z_{2}$ represent the interior lattice points dual to $C_{1}$ and $C_{2}$. We also recall that the unimodular triangles in $\Delta$ can be categorized into degree one, two and three depending on the number of triangles adjacent to them in $\Delta$. Similar to Lemma \ref{lem:heavy}, we now prove a structural result concerning double heavy cycles, 

\begin{lemma}\label{lem:double_heavy}
  Suppose that $G$ has double heavy cycles $C_{1}$ and $C_{2}$ with cut edges $e_1$ and $e_2$ as in Figure~\ref{fig:heavy}. Then the triangles $T$ and $T_1$ in $\Delta$ share an edge $[z_{1},w]$, the triangles $T$ and $T_2$ in $\Delta$ share an edge $[z_{2},w]$ where $z_{1}$ is the interior lattice point dual to $C_{1}$ and $z_{2}$ is the interior lattice point dual to $C_{2}$. The split lines $S_1$ and $S_2$ intersect in $w$, which is a shared vertex between $T_{1}$ and $T_{2}$.
\end{lemma}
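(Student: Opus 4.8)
The plan is to mirror the proof of Lemma~\ref{lem:heavy}, but to run the argument for the two cycles $C_1$ and $C_2$ simultaneously, using the node $v$ common to both cycles as the link between the two pictures. I would rely on the duality dictionary recalled before Lemma~\ref{lem:heavy}: the cycle $C_i$ of $G$ is dual to an interior lattice point $z_i$ of $P$; each node of $C_i$ is dual to a triangle of $\Delta$ having $z_i$ as a vertex; two nodes joined by an edge of $C_i$ are dual to two such triangles sharing an edge through $z_i$; and the edge that $C_1$ and $C_2$ have in common (the one incident to $v$) is dual to the lattice segment $[z_1,z_2]$. First I would note that $z_1\neq z_2$, since $C_1$ and $C_2$ are distinct cycles.

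The new ingredient is the triangle $T$ dual to $v$. Since $v$ lies on both $C_1$ and $C_2$, both $z_1$ and $z_2$ are vertices of $T$, so by unimodularity $T=\conv\{z_1,z_2,w\}$ for a unique third lattice point $w$, and its three edges are $[z_1,z_2]$, $[z_1,w]$, $[z_2,w]$. In the configuration of Figure~\ref{fig:double_heavy} the node $v_1$ is a neighbour of $v$ on $C_1$ along an edge $f_1$ different from the common edge of $C_1$ and $C_2$; dually $T$ and $T_1$ share the edge of $\Delta$ corresponding to $f_1$, and this edge is incident to $z_1$ (because $f_1\subseteq C_1$) and distinct from $[z_1,z_2]$ (because $f_1$ is not the common edge). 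The only edge of $T$ with both of these properties is $[z_1,w]$, so $T\cap T_1=[z_1,w]$. Running the identical argument on $C_2$, $v_2$, $T_2$ gives $T\cap T_2=[z_2,w]$, with the very same $w$, because $w$ is the unique third vertex of $T$. This establishes the two edge-sharing assertions. If one prefers not to use that $v_i$ is adjacent to $v$ on $C_i$, an argument parallel to the proof of Lemma~\ref{lem:heavy} inside the heavy component can be substituted: the split line $S_i$ supports that component along the edge $s_i$ of $T_i$ opposite $z_i$, and this pins $T_i$ to the triangle of the fan around $z_i$ that borders $T$.

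Finally I would read off the statement about the split lines. From $T\cap T_1=[z_1,w]$ the point $w$ is a vertex of $T_1$, and likewise of $T_2$, so $w$ is a common vertex of $T_1$ and $T_2$. Writing $T_1=\conv\{z_1,w,w_1\}$: its edge $[z_1,w]$ is dual to $f_1$, and its edge $[z_1,w_1]$ is dual to the other edge of $C_1$ incident to $v_1$, so the remaining edge $[w,w_1]$ is dual to the cut edge $e_1$; hence $s_1=[w,w_1]$ and $w\in s_1\subseteq S_1$. By symmetry $w\in S_2$, and since $S_1$ and $S_2$ are distinct lines (the heavy component has nonempty interior), $S_1\cap S_2=\{w\}$. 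I expect the one genuinely technical point to be the duality dictionary of the first paragraph — in particular, that the edges of the skeleton cycle $C_i$, which may be concatenations of bounded edges of the tropical curve produced by the skeleton operation, are still dual to lattice edges at $z_i$ and carry over the needed adjacencies to $\Delta$; this is exactly the ingredient already used in Lemma~\ref{lem:heavy}.
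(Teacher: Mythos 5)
Your overall plan---pin down $T=\conv\{z_1,z_2,w\}$, identify the shared edges as $[z_1,w]$ and $[z_2,w]$, and read off $w\in S_1\cap S_2$ from the position of the split edges $s_i$---matches the shape of the paper's conclusion, and your final paragraph (locating $s_1$ as the edge of $T_1$ opposite $z_1$ and deducing $w\in S_1\cap S_2$) is fine. But the step you defer is the real content of the lemma, and your primary argument for it does not work as stated. You write that since $v_1$ is a neighbour of $v$ on $C_1$ along an edge $f_1$, ``dually $T$ and $T_1$ share the edge of $\Delta$ corresponding to $f_1$.'' An edge of the skeleton is in general the image of a path in the dual graph of $\Delta$ passing through $2$-valent nodes that get smoothed away; each such node is dual to a (reduced) degree-two triangle in the fan around $z_1$. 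So adjacency of $v$ and $v_1$ in $G$ does not give adjacency of $T$ and $T_1$ in $\Delta$: there may be intermediate triangles between them around $z_1$. You flag this yourself as ``the one genuinely technical point,'' but the fallback you offer (``the split line $S_i$ \dots pins $T_i$ to the triangle of the fan around $z_i$ that borders $T$'') is an assertion, not an argument.

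Ruling out these intermediate triangles is precisely what the paper's proof spends most of its effort on. After normalizing $T_1=\conv\{(0,0),(1,0),(0,1)\}$ and passing to the polygon $P'$ obtained by recursively deleting degree-one faces, it supposes an intermediate face $T'$ exists; $T'$ must share the edge $[(0,0),(0,1)]$ with $T_1$, so by unimodularity its third vertex is $(-1,q)$, and convexity together with the interior lattice point $(1,1)$ (obtained from Lemma~\ref{lem:abz}) forces $q\le 0$; being degree two, $T'$ must then have an edge on $\partial P'$, which leaves it adjacent only to $T$ and $T_1$ and hence disconnected from the heavy component realizing $G_3$---a contradiction. A similar explicit computation is then used for the common vertex: the third vertex $(\alpha,\beta)$ of $T_2$ is shown to satisfy $\beta\le 0$, so $s_2=[(\alpha,\beta),(0,1)]$ and $w=(0,1)$. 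Your proof needs an argument of this kind (or an explicit citation of where such a fan-adjacency statement is established) before the edge-sharing claims can be asserted.
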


\begin{proof}
\sloppy{
Similar to the earlier proofs we fix a unimodular triangle $T_{1} =\conv \{(0,0),(0,1),(1,0)\}$, where $z_{1} = (0,0)$, with the split edge $s_{1}$ being between the points $(0,1)$ and $(1,0)$. Since the cycles $C_{1}$ and $C_{2}$ are adjacent in $G$, the interior lattice points dual to them would be hyperelliptic, i.e., they would lie on a line. Hence, $z_{2}$ would lie on the line $x=-1$. Also, by using Lemma \ref{lem:abz} on $T_{1}$ we infer that the point $(1,1)$ is an interior lattice point of $P$. 
}
  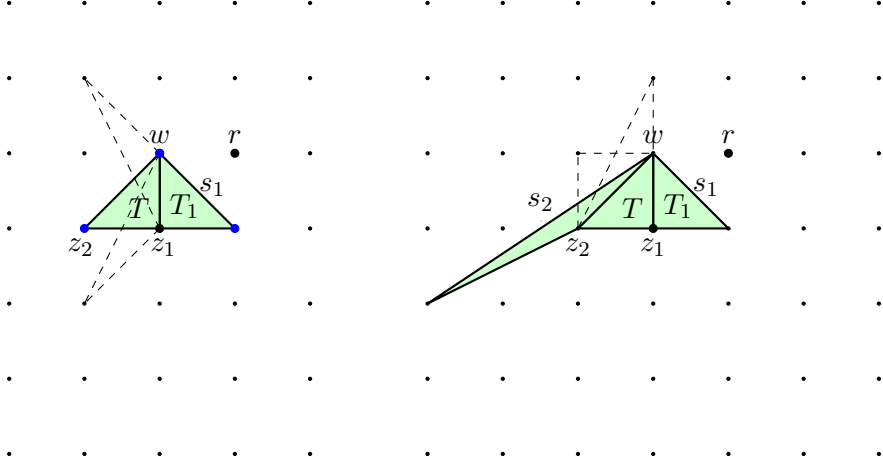
\begin{figure}\centering
    \begin{tikzpicture}[scale=1]
      \coordinate (z) at (0,0);
      \coordinate (p1) at (1,0);
      \coordinate (q1) at (0,1);
      \coordinate (p2) at (0,1);
      \coordinate (q2) at (-1,0);
      \coordinate (u1) at (1,1);
      \coordinate (v1) at (-1,-1);
      \coordinate (k1) at (0,-1);
      
      \filldraw[triangle] (z) -- (p1) -- (q1) -- cycle;
      \filldraw[triangle] (z) -- (p2) -- (q2) -- cycle;
      
      \node[above right] at (z) {$T_1$};
      \node[above left] at (z) {$T$};
      \filldraw[lattice] (z) circle (1.5pt);
      \draw[] (0,0) -- (0,1);
      \draw[] (0,0) -- (1,0);
      \draw[] (0,1) -- (1,0);
      \draw[dashed] (0,0) -- (-1,2);
      \draw[dashed] (0,1) -- (-1,2);
      \draw[dashed] (0,0) -- (-1,-1);
      \draw[dashed] (0,1) -- (-1,-1);
      \fill[black] (0.05,0) circle (.000005cm) node[align=right,   below]{$z_{1}$};
      \fill[black] (-1.05,0) circle (.000005cm) node[align=right,   below]{$z_{2}$};
      \fill[black] (0,1) circle (.000005cm) node[align=right,   above]{$w$};
      \fill[black] (1,1) circle (.000005cm) node[align=right,   above]{$r$};
      \fill[black] (0.7,0.3) circle (.000005cm) node[align=right,   above]{$s_{1}$};
      \foreach \x in {-2,...,2}{
        \foreach \y in {-3,...,3}{
          \filldraw[lattice] (\x,\y) circle (0.6pt);
        }
      }
      \filldraw[boundary] (p1) circle (1.5pt);
      \filldraw[boundary] (q1) circle (1.5pt);
      \filldraw[boundary] (p2) circle (1.5pt);
      \filldraw[boundary] (q2) circle (1.5pt);
      \filldraw[lattice] (u1) circle (1.5pt);
    \end{tikzpicture}
    \hspace{1.25cm} 
    \begin{tikzpicture}[scale=1]
    \coordinate (z) at (0,0);
    \coordinate (p1) at (1,0);
    \coordinate (q1) at (0,1);
    \coordinate (p2) at (-1,-5);
    \coordinate (q2) at (0,1);
    \coordinate (u1) at (1,1);
    \coordinate (v1) at (-1,-4);
    \coordinate (k1) at (0,-1);
    \coordinate (k2) at (0,-2);
    \coordinate (k3) at (0,-3);
    \coordinate (p2) at (-1,0);
    \coordinate (p3) at (-3,-1);
    
    \filldraw[triangle] (z) -- (p1) -- (q1) -- cycle;
    \filldraw[triangle] (z) -- (p2) -- (q1) -- cycle;
    \filldraw[triangle] (q1) -- (p3) -- (p2) -- cycle;
    
    \node[above right] at (z) {$T_1$};
    \node[above left] at (z) {$T$};
    \filldraw[lattice] (z) circle (1.5pt);
    \draw[] (0,0) -- (0,1);
    \draw[] (0,0) -- (1,0);
    \draw[] (0,1) -- (1,0);
    \draw[] (0,1) -- (-3,-1);
    \draw[] (-1,0) -- (-3,-1);

    \draw[dashed] (-1,1) -- (0,1);
    \draw[dashed] (-1,0) -- (-1,1);
    \draw[dashed] (-1,0) -- (0,2);
    \draw[dashed] (0,2) -- (0,1);
    \fill[black] (0,0) circle (.000005cm) node[align=right,   below]{$z_{1}$};
    \fill[black] (-1,0) circle (.000005cm) node[align=right,   below]{$z_{2}$};
    \fill[black] (0,1) circle (.000005cm) node[align=right,   above]{$w$};
    \fill[black] (1,1) circle (.000005cm) node[align=right,   above]{$r$};
    \fill[black] (0.7,0.3) circle (.000005cm) node[align=right,   above]{$s_{1}$};
     \fill[black] (-1.5,0.1) circle (.000005cm) node[align=left,   above]{$s_{2}$};
    \foreach \x in {-3,...,3}{
    	\foreach \y in {-3,...,3}{
    		\filldraw[lattice] (\x,\y) circle (0.6pt);
    	}
    }
    \filldraw[lattice] (u1) circle (1.5pt);
    \end{tikzpicture}
    \caption{Possibilities for the face $T$ (left) and an example with the faces $T_{2}$, $T$ and $T_{1}$ in $\Delta$ for $q=0$ (right) }
    \label{fig:double_heavy_lemma}
  \end{figure}
 
We now show that $T$ and $T_{1}$ share an edge. We assume to the contrary, that there exists at least one face $T'$ in between $T$ and $T_{1}$ which is either a degree two triangular face or a degree three triangular face adjacent to a degree one face. In both these cases, after deletion of degree one faces, $T'$ is a degree two triangular face. We consider the polygon $P'$ obtained from $P$ by recursive deletion of degree one faces. If $G$ is a graph realizable by $P$, then it is also realizable by $P'$. Henceforth, we consider the triangulation in $P'$. Given the structure of $G$ and our choice of $T_{1}$, we realize that the common edge between $T_{1}$ and $T'$ would be $[(0,0),(0,1)]$. Using unimodularity of $T'$ we realize that the third vertex of $T'$ lies on the line $x=-1$. Invoking convexity, presence of $T_{1}$ in $P'$ and the point $(1,1)$ being an interior lattice point, we observe that any point $(-1,q)$ with $q=1$ cannot be a vertex of $T'$. Therefore we consider the third vertex of $T'$ to be $(-1,q), q \leq 0$. Since $T'$ is a degree two triangular face, this would imply that one of its edge is along $\partial P'$, which gives us a contradiction because then it can only be adjacent to $T$ and $T_{1}$ and not to any other triangle, but it has to be adjacent to at least one other triangle from the heavy component which realizes $G_{3}$. Hence, we conclude that $T$ and $T_{1}$ share an edge in his case. Since, $v$ is symmetric with respect to $v_{1}$ and $v_{2}$ in $G$, therefore $T$ is symmetric with respect to $T_{1}$ and $T_{2}$. Hence, we also conclude that $T$ and $T_{2}$ share an edge, illustrated in Figure \ref{fig:double_heavy_lemma}.

We now are left to show that $T_{1}$ and $T_{2}$ share a vertex. We realize that given $T_{1} = \conv \{(0,0),(1,0),(0,1)\}$ and $T = \{(0,0), (0,1), (-1,q)\}, \> q \leq 0$, the edge that is shared by $T$ and $T_{2}$ is the edge between the points $(-1,q)$ and $(0,1)$. When we consider all possible candidates for the third vertex $(\alpha,\beta)$ of $T_{2}$, we realize that $\alpha < 0$. Also, if $\beta > 1$, then this contradicts convexity of $P$, at the point $(0,1)$. Hence, $\beta \leq 1$. If $\beta = 1$, this would imply that $P_{1}$ is squeezed between the line $y = 0$ and $y =1$, which has no interior lattice points. Therefore, $\beta \leq 0$. We obtain that for all such values of  $\alpha$ and $\beta$ the split edge $s_{2}$ is of the form $[(\alpha,\beta),(0,1)]$. This implies that $T_{1}$ and $T_{2}$ share the vertex $w = (0,1)$, illustrated in Figure \ref{fig:double_heavy_lemma}. Hence, the proof.
\end{proof}

We now define a graph having a double heavy cycle with two loops,

\begin{definition}
 We say that a connected trivalent planar graph $G$ has a \emph{double heavy cycle with two loops} if it has the form as described in Figure \ref{fig:double_heavy_two_loops}, where the shaded region $G_{3}$ a subgraph of positive genus.  
\end{definition}

\begin{figure}[H]\centering
  \begin{tikzpicture}
    \coordinate (v2) at (-0.5,0.5);
	\coordinate (v1) at (3.5,0.5);
	\coordinate (v) at (1.6,0.5);
	\coordinate (u2) at (-1.5,1.5);
	\coordinate (u1) at (4.5,1.5);
	
	\fill[gray!40!white] (0.5,-0.5) rectangle (2.5,-1);
	\draw[] (0,0) -- (-0.5,0.5);
	\draw[] (-0.5,0.5) -- (3.5,0.5);
	\draw[] (0,0) -- (0.5,-0.5);
	\draw[] (-1.5,1.5) -- (-0.5,0.5);
	\draw[] (3.5,0.5) -- (4.5,1.5);
	\draw[] (-1.85,1.85) circle (0.5cm);
	\draw[] (0.5,-0.5) -- (2.5,-0.5);
	\draw[] (0.5,-0.5) -- (0.5,-1);
	\draw[] (2.5,-0.5) -- (2.5,-1);
	\draw[] (2.5,-1) -- (0.5,-1);
	\draw[] (2.5,-0.5) -- (3,0);
	\draw[] (3,0) -- (3.5,0.5);
	\draw[] (1.6,0.5) -- (1.6,-0.5);
	\draw[] (4.85,1.85) circle (0.5cm);
	
    \node at ($(v2)$) [above right=-0.1pt] {$v_{2}$};
	\node at ($(v1)$) [above left=-0.1pt] {$v_{1}$};
	\node at ($(v)$) [above left=-0.05pt] {$v$};
	
	\fill[black] (-0.5,0.5) circle (.09cm);
	\fill[black] (3.5,0.5) circle (.09cm);
	\fill[black] (1.6,0.5) circle (.09cm);
	\fill[black] (-1.25,0.7) circle (.0001cm) node[align=left,   above]{$e_{2}$};
	\fill[black] (4.3,0.7) circle (.0001cm) node[align=left,   above]{$e_{1}$};
	\fill[black] (4.5,1.5) circle (.1cm) node[align=left,   above]{};
	\fill[black] (-1.5,1.5) circle (.1cm) node[align=left,   above]{};
	\fill[black] (1,-0.2) circle (.0001cm) node[align=left,   above]{$C_{2}\quad$};
	\fill[black] (2.6,-0.2) circle (.0001cm) node[align=left,   above]{$C_{1}\quad$};	
	\fill[black] (1.65,-1.05) circle (.0001cm) node[align=left,   above]{$G_{3}\quad$};
	\end{tikzpicture}
	\caption{Double heavy cycle with two loops}
	\label{fig:double_heavy_two_loops}
\end{figure}
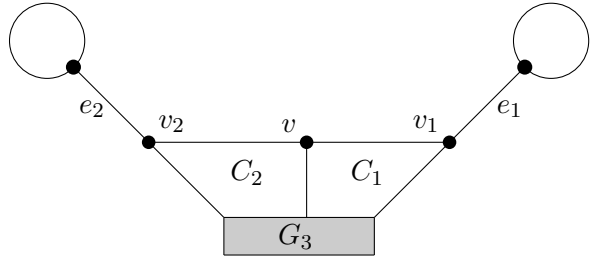

We now establish a structural result concerning double heavy cycles with two loops,

\begin{theorem}\label{thm:double_heavy_two_loops}
Suppose $G$ is a graph with double heavy cycles $C_{1}$ and $C_{2}$ with two loops with cut edges, $e_{1}$ and $e_{2}$, as in Figure \ref{fig:double_heavy_two_loops}. If $g(P)= 6$, then the interior lattice polygon of the heavy component, i.e., $\text{int}(P_{3})$ is a unit parallelogram. 
\end{theorem}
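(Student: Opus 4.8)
The plan is to run the same normalization as in Lemma~\ref{lem:double_heavy} and Theorem~\ref{thm:heavy-two-loops}, and then exploit the presence of \emph{two} loops to squeeze the heavy component from both sides. First I would fix coordinates so that $T_1 = \conv\{(0,0),(1,0),(0,1)\}$ with $z_1 = (0,0)$ and $w = (0,1)$, and invoke Lemma~\ref{lem:double_heavy} to get that $T$ and $T_2$ share the edge $[z_2,w]$ with $z_2$ on the line $x=-1$, and that the split lines meet at $w=(0,1)$. Since $C_1$ and $C_2$ are adjacent cycles, $z_1$ and $z_2$ are consecutive interior lattice points on a common line, so after the normalization $\text{int}(P_3)$ contains at least $z_1=(0,0)$ and $z_2=(-1,0)$. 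Applying Lemma~\ref{lem:abz} to $T_1$ (noting, as in the proof of Theorem~\ref{thm:heavy-one-loop}, that $r=(1,1)$ must be \emph{interior} to $P$, since the alternative squeezes the loop $G_1$ between $y=0$ and $y=1$ which then has no interior point), and symmetrically to $T_2$, I would get that both $(1,1)$ and the point $r'$ obtained from $T_2$ are interior lattice points of $P$.

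The heart of the argument is the genus count. We have $g(P)=6$, and $G_1$ and $G_2$ each carry a loop, hence $g(P_1) \geq 1$ and $g(P_2) \geq 1$; this leaves $g(P_3) \leq 4$. Because $C_1$ and $C_2$ lie in the heavy component and their dual interior points $z_1,z_2$ are already accounted for, $g(P_3) \geq 2$. I would then rerun the hyperellipticity argument from Theorem~\ref{thm:heavy-two-loops}: convexity of $P$ together with the interior points $r$ and $r'$ forces $\text{int}(P_3)$ to lie on the line through $z_1$ and $z_2$ (the line $y=0$ in our coordinates) — \emph{unless} $g(P_3)$ is small. The new feature is that here I want to push this further: I claim $g(P_3) = 4$ is impossible. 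The reason should mirror the "$g(P')\geq 4$ is impossible" step in the proof of Theorem~\ref{thm:heavy-one-loop}: if $(0,-3)$ or $(-1,-3)$ were forced to be an interior point of $P_3$ together with the collinear constraint, then no valid $P_1$ (resp.\ $P_2$) of positive genus fits, because the boundary polygon $P_1$ would be pinched into a triangle with an excluded apex. Similarly $g(P_3)=3$ with the loops on both sides should over-constrain $P$. So I expect to be left with $g(P_3)=2$, with $\text{int}(P_3) = \{z_1,z_2\} = \{(0,0),(-1,0)\}$ — but a two-element set is a segment, not a parallelogram, so the statement as phrased must mean that $\text{int}(P_3)$, interpreted as the lattice polytope it spans, is forced to be two-dimensional, i.e.\ $g(P_3)=4$ actually \emph{is} the relevant case and the collinearity argument fails precisely for a parallelogram configuration.

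Let me recalibrate: the right reading is that $g(P_3)=4$ and $\text{int}(P_3)$ is a $2\times 2$-area quadrilateral containing exactly four lattice points, namely $\{(0,0),(1,0),(0,1),(1,1)\}$ up to unimodular equivalence — equivalently, a \emph{unit parallelogram} in the lattice sense (area $1$, four vertices, no other lattice points). The plan is then: (i) show $g(P_1)=g(P_2)=1$ exactly, so $g(P_3)=4$; (ii) show that the four interior points of $P_3$ cannot be collinear, using a variant of Lemma~\ref{lem:abz} applied to the triangle $T$ and the faces adjacent to $z_1,z_2$ — with loops pinning down $G_1,G_2$ from both sides, the usual "interior points in a line" forcing breaks, and instead convexity at $w=(0,1)$ together with the interior points $r=(1,1)$ and $r'$ forces $\text{int}(P_3)$ into a bounded two-dimensional region whose only lattice configuration with four points is the unit parallelogram; (iii) rule out the genuinely two-dimensional but non-parallelogram alternatives (e.g.\ an L-tromino plus a point, or a $2\times 2$ triangle missing its interior) by the same pinching argument that killed $g(P_3)\geq 5$, since any larger or differently shaped $\text{int}(P_3)$ forces a neighboring interior point on a line $y = mx + c$ that excludes the apex needed for $P_1$ or $P_2$. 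The main obstacle will be step (iii): a careful enumeration of the finitely many lattice polygons with four interior points, checking for each one that attaching two positive-genus loop components via split lines through $w$ either violates convexity of $P$ or forces $g(P)>6$. I would organize this as a short case analysis on the shape of $\text{int}(P_3)$, leaning on Lemma~\ref{lem:abz} and convexity at $w$ exactly as in the two preceding proofs, and expect all non-parallelogram cases to collapse.
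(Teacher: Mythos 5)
Your setup matches the paper's: the same normalization $T_1=\conv\{(0,0),(1,0),(0,1)\}$, the appeal to Lemma~\ref{lem:double_heavy} to place $z_2$ on the line $x=-1$ and force $w=(0,1)$, the use of Lemma~\ref{lem:abz} to make $(1,1)$ an interior point, and the genus count $g(P_1)=g(P_2)=1$ (the loops), hence $g(P_3)=4$. But the proposal then wanders: you first argue that $g(P_3)=4$ and $g(P_3)=3$ should be impossible and land on $g(P_3)=2$ with $\interior(P_3)$ a two-point segment, which flatly contradicts the statement you are proving (and contradicts your own earlier count, since the two loops contribute exactly $1+1$ to the genus, leaving $4$ for $P_3$). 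You notice the contradiction and ``recalibrate,'' but the recalibrated plan leaves the decisive step --- why the four interior points of $P_3$ must form a unit parallelogram rather than lie on a line or form some other four-point configuration --- as an unexecuted ``careful enumeration'' that you only ``expect'' to collapse. That step is the entire content of the theorem, so as written there is a genuine gap, not just a missing detail.

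For comparison, the paper's mechanism at exactly this point is concrete and short: the interior lattice points of the heavy component are squeezed between the lines $y=(1-q)x+1$ and $y=3x-3$ (coming from the boundary edges at $w$ and at $(1,0)$ forced by the two loop components), so they can only sit on the lines $x=0$ and $x=-1$; convexity of $P$ at $(1,0)$ forces $(-1,q-1)\in\interior(P_3)$ as soon as $(0,-1)\in\interior(P_3)$; and the alternative that three interior points lie on $x=0$ is killed either by convexity of $P$ at $(-1,q)$ or by producing more than four interior points in $P_3$. That pins the four points to $(0,0),(0,-1),(-1,q),(-1,q-1)$, a unit parallelogram. Your proposal never isolates this two-lines-plus-convexity argument; the ``enumeration of all lattice polygons with four interior points'' you propose instead is both much heavier and not actually carried out, and your intermediate claim that the collinear (hyperelliptic) forcing applies here is what led you astray --- with loops on \emph{both} sides the confinement is to two adjacent vertical lines, not to one.
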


\begin{proof}

We fix the triangle $T_{1} =  \conv\{(0,0),(0,1),(1,0)\}$, where $z_{1} = (0,0)$  We know by Lemma \ref{lem:double_heavy} that $T= \conv \{(0,0), (0,1), (-1,q)\}, q \leq 0$ and the triangles $T,T_{1}$ and $T_{2}$ each share an edge amongst themselves. By applying Lemma \ref{lem:abz} on $T_{1}$ we obtain that $(1,1)$ is the unique interior lattice point of $P_{1}$. We see that in this case the interior lattice points of the heavy component are squeezed between the lines $y = (1-q)x+1$ and $y=3x-3$. Also, using convexity of $P$ at the point $(1,0)$, we conclude that $(-1,q-1)$ is an interior point of $P_{3}$ if $(0,-1) \in \text{int}(P_{3})$. We realize that for the case when  $g(P_{3}) =4$, the point $(0,-1) \in \text{int}(P_{3})$ and we need to show that both the lines $x=0$ and $x=-1$ each have one interior lattice point other than the points $(0,0)$ and $(-1,q)$. We assume the contrary, i.e., the line $x=0$ has two interior lattice points. If $-1 \geq q \leq 0$ then this contradicts convexity of $P$ at the point $(-1,q)$. If $q \leq -1 $, then we obtain more interior lattice points in $P_{3}$ which contradicts the fact that $g(P_{3}) = 4$. Hence, we conclude that for $g=6$ the interior lattice polygon of the heavy component, i.e., $\text{int}(P_{3})$ is a unit parallelogram.

  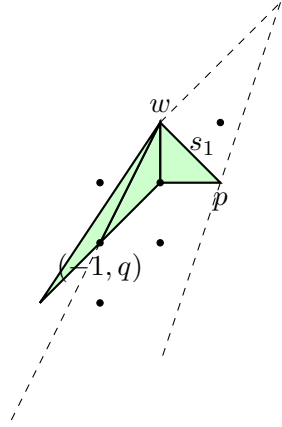
\begin{figure}[H]\centering
    \begin{tikzpicture}[scale=0.8]
    \coordinate (z) at (0,0);
    \coordinate (p1) at (1,0);
    \coordinate (q1) at (0,1);
    \coordinate (q2) at (0,1);
    \coordinate (u1) at (1,1);
    \coordinate (v1) at (-1,-4);
    \coordinate (k1) at (0,-1);
    \coordinate (k2) at (0,-2);
    \coordinate (k3) at (0,-3);
    \coordinate (p2) at (-1,-1);
    \coordinate (p3) at (-2,-2);
    \coordinate (q3) at (-1,-2);
    \coordinate (i1) at (-1,-1);
    \coordinate (i2) at (-1,0);
    \coordinate (i3) at (0,-1);
    
    \filldraw[triangle] (z) -- (p1) -- (q1) -- cycle;
    \filldraw[triangle] (z) -- (p2) -- (q1) -- cycle;
    \filldraw[triangle] (q1) -- (p3) -- (p2) -- cycle;
    
    \filldraw[lattice] (z) circle (1.5pt);
    \draw[] (0,0) -- (0,1);
    \draw[] (0,0) -- (1,0);
    \draw[] (0,1) -- (1,0);
    \draw[dashed] (0,1) -- (2,3);
    \draw[dashed] (-1,-1) -- (-2.5,-4);
    \draw[dashed] (2,3) -- (0,-3);


    \fill[black] (-1,-1) circle (.000005cm) node[align=right,   below]{$(-1,q)$};
    \fill[black] (0,1) circle (.000005cm) node[align=right,   above]{$w$};
    \fill[black] (1,0) circle (.000005cm) node[align=right,   below]{$p$};

    \fill[black] (0.7,0.3) circle (.000005cm) node[align=right,   above]{$s_{1}$};
    \filldraw[lattice] (u1) circle (1.5pt);
    \filldraw[lattice] (i3) circle (1.5pt);
    \filldraw[lattice] (i2) circle (1.5pt);
    \filldraw[lattice] (i1) circle (1.5pt);
    \filldraw[lattice] (q3) circle (1.5pt);
        
    \end{tikzpicture}
    \qquad
    \caption{The triangular faces $T,T_{1}$ and $T_{2}$ in a triangulation dual to a graph with double heavy cycle with two loops}
    \label{fig:double_heavy_theorem}
  \end{figure}

\end{proof}

\section{Conclusions}

We now furnish conclusions to what our results entail with respect to classification of skeletons of genus six. Firstly, we call the graph 'g' in Figure \ref{fig:genus6_uncategorized} as an \emph{enve-loop} graph (envelope + loop). We also recognize that the graphs 'a','b','c' and 'd' all have a heavy cycle with one loop. Out of these the heavy component in the graph 'a' is not hyperelliptic, hence it can be ruled out by Theorem \ref{thm:heavy-one-loop}. Also, graph 'b' does have a genus three hyperelliptic heavy component, although the orientation of the hyperelliptic points is not permissible according to Theorem \ref{thm:heavy-one-loop}, hence it can be ruled out as well. Graphs 'c' and 'd' both have permissible hyperelliptic heavy components, but both of them have a genus two component which has a cut edge, which would correspond to a nontrivial split in $P_{2}$ which is not possible by Theorem \ref{thm:heavy-one-loop}, hence these two graphs are also ruled out.

Subsequently, we realize that graphs 'e', 'f' and 'h' all have double heavy cycle with two loops. By Theorem \ref{thm:double_heavy_two_loops}, we know that the convex hull of the interior lattice points of the heavy component $P_{3}$ is a unit parallelogram, for a lattice polygon $P$ realizing a double haeavy cycle with two loops. Hence, graph 'f' is eliminated as the heavy component cannot be a parallelogram in this case. Let $C_{1},C_{2},C_{3}$ and $C_{4}$ be the four cycles in the subgraph dual to the heavy component $P_{3}$, within a realizable graph of $g=6$ with double heavy cycle with two loops. Given that the graph is trivalent, we realize that the subgraph cannot have two connected components each of genus two. Only possible connected components are of genus three and genus one. Since the interior points dual to $C_{i}'$s form a unit parallelogram, this implies that either the cycles $C_{1},C_{2}$ and $C_{3}$ share a vertex or the the cycles $C_{1},C_{2}$ and $C_{4}$ share a vertex. Hence, we conclude that for a realizable graph $G$ of genus six with a double heavy cycle with two loops, in the subgraph realizing the heavy component, at least three cycles should share a vertex. Therefore, with this characterization we can rule out the graphs 'e' and 'h'. Now we are ready to state the full charachterization of all tropical planar curves up to genus six, which can be seen as a generalization of Theorem 3 in \cite{Joswig2020},

\begin{theorem}\label{thm:classify_genus_six}
A trivalent planar graph $G$ other than the enve-loop graph, of genus $g \leq 6$, is tropically planar if and only if none of the following obstructions occur
\begin{enumerate}
    \item it contains a sprawling node, or
    \item it contains a sprawling triangle and $g \geq 5$, or
    \item it is crowded, or
    \item it is a TIE-fighter, or
    \item it has a heavy cycle with two loops such that the interior lattice points of the
heavy component do not align with the intersection of the two split lines, or
    \item for a cut edge $e$ in $G$, the connected components of $G \setminus \{e\}$, after smoothing out 2-valent vertices, are not tropically planar, or
    \item it has a heavy cycle with one loop such that either the interior lattice points of the
heavy component do not align with the intersection of the two split lines or the connected component with genus greater than one has a cut edge, or
    \item it has a double heavy cycle with two loops such that either the interior lattice points of the heavy component do not form a unit parallelogram or no three cycles in the heavy component share a vertex. 
\end{enumerate}
\end{theorem}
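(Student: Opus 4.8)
The plan is to establish the two implications separately, leaning on the complete classification for genus at most five from \cite{Joswig2020} and on the computational census of trivalent planar genus-six graphs from \cite{M19}, and to use the new results of this paper to close exactly the gaps that census left open.

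For the forward (necessity) direction I would certify that each listed condition obstructs tropical planarity: (i)--(ii) are the sprawling obstructions of \cite{JB15}, (iii) is crowdedness \cite{M17}, (iv) is the TIE-fighter of \cite{M19}, (v) is Theorem~\ref{thm:heavy-two-loops}, and (vi) records that a regular subdivision realizing $G$ restricts, along the split line dual to a cut edge, to subdivisions realizing the two smoothed components, so that non-realizability of one component forces non-realizability of $G$ (this also gives the statement an inductive flavour in $g$). Condition (vii) is the content of Theorem~\ref{thm:heavy-one-loop}: a tropically planar graph with a heavy cycle and one loop has $P_3$ hyperelliptic with at most three interior lattice points aligned on the line through $z$ and $w$, and, when $g=6$ and $g(P_2)=2$, admits no nontrivial split of $\Delta_2$; hence a non-hyperelliptic or misaligned heavy component, or a cut edge in the side component of genus at least two, is ruled out. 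Condition (viii) is Theorem~\ref{thm:double_heavy_two_loops} together with the refinement in the Conclusions that, for a realizable double heavy cycle with two loops of genus six, at least three of the four cycles of the heavy component meet at a common vertex.

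For the backward (sufficiency) direction I would argue according to the genus. If $g(G)\le 5$, then the absence of (i)--(viii) in particular removes every obstruction of \cite[Theorem~3]{Joswig2020}, so $G$ is tropically planar. If $g(G)=6$, I invoke \cite{M19}: every trivalent planar genus-six graph is tropically planar, carries one of (i)--(vi), or is one of the eight graphs $a,\dots,h$ of Figure~\ref{fig:genus6_uncategorized}. In the first two cases the hypothesis finishes the argument, and in the third case, since $G$ is not the enve-loop graph 'g', it equals one of $a,b,c,d,e,f,h$, each of which---by the case analysis of the Conclusions---violates (vii) (graph $a$ by a non-hyperelliptic heavy component, $b$ by a misaligned hyperelliptic one, $c$ and $d$ each by a cut edge in a genus-two side component) or (viii) (graph $f$ since its heavy component cannot be a unit parallelogram, $e$ and $h$ since no vertex of their heavy components lies on three cycles). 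This contradicts the hypothesis, so the third case is vacuous and $G$ is tropically planar.

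A final remark on what carries the weight. The enve-loop graph 'g' has to be excluded because it is non-realizable (by \cite{M19}) and yet exhibits none of (i)--(viii), so no obstruction in this list accounts for it. The genuinely non-routine step is the identification, for each of $a,\dots,h$ other than 'g', of the precise heavy-cycle-with-one-loop or double-heavy-cycle-with-two-loops pattern and the verification of the correct side condition from Theorem~\ref{thm:heavy-one-loop} or Theorem~\ref{thm:double_heavy_two_loops}; this is essential rather than cosmetic, since the Remark and Figure~\ref{fig:g=6_realizable} exhibit a realizable genus-six graph that contains a heavy cycle with one loop---so it is the side conditions, not the bare pattern, that do the work.
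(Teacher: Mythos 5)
Your proposal is correct and follows essentially the same route as the paper: the paper's argument (given in the Conclusions section rather than a formal proof environment) likewise reduces to the prior classifications for conditions (i)--(vi) and genus at most five, then disposes of the eight outstanding genus-six graphs by identifying $a$--$d$ as heavy cycles with one loop violating Theorem~\ref{thm:heavy-one-loop} and $e$, $f$, $h$ as double heavy cycles with two loops violating Theorem~\ref{thm:double_heavy_two_loops} and the three-cycles-share-a-vertex refinement, with the enve-loop graph $g$ excluded by hypothesis. Your explicit remark that $g$ must be excluded precisely because it satisfies none of (i)--(viii) yet is non-realizable, and that the side conditions rather than the bare heavy-cycle patterns carry the weight, makes the logical structure clearer than the paper's own exposition but does not change the argument.
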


We know that as the genus increases almost all graphs are not troplanar \cite[Theorem 4.2 ]{M19}. Additionally, the class of troplanar graphs is not minor-closed \cite[Figure 6]{M19}. Hence, such classifications become increasingly difficult as the genus increases. One area of future exploration could be to finish the classification for the case of genus seven, however no list of unclassified graphs is available for it. Also, there has been recent progress in finding criteria which do not include a cut edge namely the criteria of \emph{big face graphs} defined in \cite{MT21}, the techniques of which can be used to define new criteria for higher genus.   

\bibliographystyle{siam}
\bibliography{biblio.bib}

\end{document}